\documentclass[a4paper,12pt]{article}
\usepackage[cp1251]{inputenc}            % Выбор языка и кодировки
\usepackage[english]{babel}
\usepackage[a4paper,headheight=17pt,left=25mm,right=15mm,bottom=25mm,top=30mm]{geometry}
\usepackage[ruled,vlined]{algorithm2e}
\usepackage{amssymb}
\usepackage{amsmath, amsthm}
\usepackage{enumitem}

\theoremstyle{plain}
\newtheorem{thm}{Theorem}
\newtheorem{lem}{Lemma}

\newtheorem{cor}{Corollary}[thm]

\theoremstyle{definition}
\newtheorem{example}{Example}

\newtheorem{defn}{Definition}

%
%%%

\begin{document}

\begin{center}\Large
\textbf{Formations of Finite Groups in Polynomial Time:\\
the $\mathfrak{F}$-Hypercenter}\normalsize

\smallskip
Viachaslau I. Murashka

 \{mvimath@yandex.ru\}

Faculty of Mathematics and Technologies of Programming,
Francisk Skorina Gomel State University, Sovetskaya 104, Gomel,
246028, Belarus\end{center}

 \begin{abstract}
  For a wide family of formations $\mathfrak{F}$  (which includes Baer-local formations) it is proved that the $ \mathfrak{F}$-hypercenter
  of a permutation finite group can be computed in polynomial time. In particular, the algorithms for computing the $\mathfrak{F}$-hypercenter for the following classes of
  groups are suggested: hereditary local formations with the Shemetkov property, rank formations, formations of all  quasinilpotent, Sylow tower, $p$-nilpotent, supersoluble, $w$-supersoluble and
  $SC$-groups. For some of these formations algorithms for the computation of the intersection of all maximal $\mathfrak{F}$-subgroups are suggested.
 \end{abstract}

 \textbf{Keywords.} Finite group; $\mathfrak{F}$-hypercenter; Baer-local formation; permutation group computation;
       polynomial time algorithm.

\textbf{AMS}(2010). 20D10, 20B40.

\section*{Introduction}

All groups considered here are finite. Recall that a class of groups is a collection $\mathfrak{X}$ of groups with the property that
if $G\in\mathfrak{X} $ and if $H \simeq  G$, then $H \in \mathfrak{X}$. The theory of classes of groups is well developed nowadays (for example, see \cite{s9, Doerk1992, Guo2015, s6}) and has various applications (for example, in the theory of formal languages \cite{BallesterBolinches2014},  in the solution of Yang-Baxter equation \cite{BallesterBolinches2023} and etc.) The main its tasks   are to construct classes of groups and  to recognize is a  given group belongs to a given class or not. With a class of groups $\mathfrak{X}$ one can associate the canonical subgroups such as the $\mathfrak{X}$-residual, the $\mathfrak{X}$-radical, a $\mathfrak{X}$-projector, a $\mathfrak{X}$-injector, the $\mathfrak{X}$-hypercenter and etc. Even if a group does not belong to $\mathfrak{X}$, then these subgroups encode some of its properties associated with~$\mathfrak{X}$.

The algorithms for computing the $\mathfrak{X}$-residual and $\mathfrak{X}$-projectors (of a soluble group) were presented in \cite{EICK2002, HOFLING2001}. The algorithms for computing the $\mathfrak{X}$-radical and $\mathfrak{X}$-injectors (of a soluble group) were presented in \cite{HOFLING2001}. Nevertheless the algorithms for the computation of the $\mathfrak{X}$-hypercenter which plays an important role in the theory of formations were not suggested before.

A group can be represented in different ways. In this paper we will consider only permutation groups because the computational theory of permutation groups is well developed (see \cite{Seress2003}). We leave  the reader to decide how our algorithms can be applied in the case of non-permutation groups.
The aim of this paper is to find effective algorithms (which runs in polynomial time for permutation groups) for the computation of the  $\mathfrak{F}$-hypercenter for a wide family of formations $\mathfrak{F}$ of not necessary soluble groups.

%Also we can't modify algorithm form Theorem \ref{thm1} to compute  as shown in the following example

\section{The Main Result}

Recall that the hypercenter of a group is just the final member of its upper central series. The concept of the hypercenter was widely studied and  generalized by different mathematicians (Baer \cite{Baer1959}, Huppert \cite{Huppert1969}, Shemetkov \cite{Shemetkov1974}, Skiba \cite{Skiba2012} and many other). Some of their results  are presented in
\cite[Chapter 1]{Guo2015}. In the final form the notion of hypercenter appeared in \cite{s6}.   Let $\mathfrak{X}$ be a class of groups.  A chief factor $H/K$ of  $G$ is called   $\mathfrak{X}$-\emph{central} in $G$ provided    that the semidirect product  $(H/K)\rtimes (G/C_G(H/K))$ of $H/K$ with $G/C_G(H/K)$ corresponding to the action by conjugation of $G$ on
$H/K$ belongs to $\mathfrak{X}$ (see \cite[p. 127--128]{s6}). The  $\mathfrak{X}$-hypercenter $\mathrm{Z}_\mathfrak{X}(G)$ is the greatest normal subgroup of $G$ all whose $G$-composition factors are $\mathfrak{X}$-central.

From Barnes-Kegel Theorem \cite[IV, Proposition 1.5]{Doerk1992} it follows that if $\mathfrak{F}$ is a formation and $G\in\mathfrak{F}$, then $G=\mathrm{Z}_\mathfrak{F}(G)$. The converse of this statement is false as we can see on the example of the class of all abelian groups. Shemetkov \cite{SHEM} asked to described all formations $\mathfrak{F}=(G\mid G=\mathrm{Z}_\mathfrak{F}(G))$ (such formations are called $Z$-saturated \cite{Murashka2022a}). The solution of this problem was started in  \cite{BallesterBolinches1999, Murashka2022a}.   The
method for calculating the $\mathfrak{F}$-hypercenter of a given group
 may be a useful tool in the solution of this problem.

In \cite{Neumann1987} the example of a permutation group of degree $n$ was given such that it has a quotient with no faithful representations of degree less than $2^{n/4}$. That is why the given definition of the $\mathfrak{X}$-central chief factor is not good from the computational point of view. Therefore we will consider a more general definition:

\begin{defn}[{\cite[Definition 2]{Murashka2024}}]\label{def1}
Let $\mathrm{f}$ be a function which assigns 0 or 1 to every group $G$ and its chief factor $H/K$ such that

(1) $\mathrm{f}(H/K, G)=\mathrm{f}(M/N, G)$ whenever $H/K$ and $M/N$ are $G$-isomorphic chief factors of $G$;

(2) $\mathrm{f}(H/K, G)=\mathrm{f}((H/N)/(K/N), G/N)$ for every $N\trianglelefteq G$ with $N\leq K$.\\
Such functions $\mathrm{f}$ will be called chief factor functions.
Denote by $\mathcal{C}(\mathrm{f})$ the class of groups
\begin{center}
  $(G\mid G\simeq 1$ or $\mathrm{f}(H/K, G)=1$ for every chief factor $H/K$ of the group $G)$.
\end{center}
\end{defn}

In particular if $\mathrm{f}(H/K, G)=1$ iff $H/K$ is $\mathfrak{X}$-central, then $\mathrm{f}$ is a chief factor function. So $Z$-saturated formations (and hence local and Baer-local formations) are the particular cases of this construction.
With each chief factor function we can associate the following subgroup:

\begin{defn}
Denote by $\mathrm{Z}(G,\mathrm{f})$   the greatest normal subgroup of $G$ such that $\mathrm{f}(H/K, G)=1$ for every chief factor $H/K$ of $G$ below it.
\end{defn}

The subgroup $\mathrm{Z}(G,\mathrm{f})$ becomes the $\mathfrak{X}$-hypercenter of $G$ with the right choice of $\mathrm{f}$. In particular, if $\mathrm{f}$ checks if $H/K$ is central  (resp. cyclic) in $G$, then  $\mathrm{Z}(G,\mathrm{f})$ is the (resp. supersoluble) hypercenter.

\begin{example}
Let $\mathrm{f}_1$ and $\mathrm{f}_2$ check if $H/K$ is abelian and if $G/C_G(H/K)$ is soluble respectively. Then $\mathcal{C}(\mathrm{f}_1)=\mathcal{C}(\mathrm{f}_2)=\mathfrak{S}$ is the class of all soluble groups. Note that $\mathrm{Z}(G,\mathrm{f}_1)$ is the soluble radical of $G$ and $\mathrm{Z}(G,\mathrm{f}_2)$ is the soluble hypercenter of $G$. If we consider the semidirect product of the alternating group $A_5$ of degree 5 and its faithful simple module over $\mathbb{F}_5$ (it exists by \cite[B, Theorem 10.3]{Doerk1992}), then we will see that these subgroups are different.
\end{example}

The main result of the paper is

\begin{thm}\label{thm1}
Assume that $\mathrm{f}(H/T, G)$ can be computed in polynomial time $($in $n)$ for every group $G\leq S_n$ and its chief factor $H/T$. Then
the  subgroup $\mathrm{Z}(G/K,\mathrm{f})$ is well defined and can be computed in polynomial time $($in $n)$ for every $K\trianglelefteq G\leq S_n$.
\end{thm}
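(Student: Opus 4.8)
The plan is to keep every group operation inside $G$ (so that the quotient $G/K$, which by \cite{Neumann1987} may admit no permutation representation of subexponential degree, is never built explicitly), to prove well-definedness by a product-closure argument, and then to compute $\mathrm{Z}(G/K,\mathrm{f})$ by iterating an ``$\mathrm{f}$-socle'' step whose number of repetitions is bounded by the composition length. \emph{Well-definedness.} For $K\le N\trianglelefteq G$ call $N$ \emph{$K$-good} if $\mathrm{f}(H/T,G)=1$ for every chief factor $H/T$ of $G$ with $K\le T\le H\le N$; by Definition~\ref{def1}(2) the $K$-good subgroups are exactly the preimages in $G$ of those normal subgroups of $G/K$ all of whose chief factors have $\mathrm{f}$-value $1$. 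If $N_1,N_2$ are $K$-good, then $N_1\cap N_2\ge K$, and via the $G$-isomorphism $N_1N_2/N_1\cong_G N_2/(N_1\cap N_2)$, refinement through a chief series, and the Jordan--Hölder theorem, every chief factor of $G$ between $K$ and $N_1N_2$ is $G$-isomorphic either to one between $K$ and $N_1$ or to one between $N_1\cap N_2$ and $N_2$; Definition~\ref{def1}(1) now shows $N_1N_2$ is $K$-good. Hence a largest $K$-good subgroup $W$ exists, $W/K=\mathrm{Z}(G/K,\mathrm{f})$, and in particular this subgroup is well defined.

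\emph{The algorithm.} For a group $X$ let $\mathrm{Soc}_{\mathrm{f}}(X)$ denote the subgroup generated by all minimal normal subgroups $N$ of $X$ with $\mathrm{f}(N,X)=1$; equivalently it is the product of those isotypic (= homogeneous) components of $\mathrm{Soc}(X)$ whose minimal normal subgroups have $\mathrm{f}$-value $1$, and by Definition~\ref{def1}(1) (minimal normal subgroups within one isotypic component being $X$-isomorphic) this does not depend on how $\mathrm{Soc}(X)$ is decomposed. Put $Z_0=K$ and, given $Z_i\trianglelefteq G$ with $K\le Z_i$, let $Z_{i+1}$ be the full preimage in $G$ of $\mathrm{Soc}_{\mathrm{f}}(G/Z_i)$; stop as soon as $Z_{i+1}=Z_i$ and return generators of $Z_\infty:=Z_i$, so that $Z_\infty/K$ is the desired subgroup. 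One step is carried out by computing $\mathrm{Soc}(G/Z_i)$ and a decomposition of it into minimal normal subgroups (polynomial time for a permutation group with a given normal subgroup, by the standard algorithms for chief series and socles; see \cite{Seress2003}), then evaluating $\mathrm{f}$ on one minimal normal subgroup $M/Z_i$ from each isotypic component, using that $\mathrm{f}(M/Z_i,G)$ is computable in polynomial time by hypothesis (together with Definition~\ref{def1}(2)), and finally forming the product of $Z_i$ with those components of $\mathrm{f}$-value $1$. The number of isotypic components is at most the composition length of $G/Z_i$, i.e.\ $O(\log|G|)=O(n\log n)$.

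\emph{Correctness and running time.} With $W$ as above I claim $Z_\infty=W$. That $Z_i\le W$ for all $i$ follows by induction: suppose $Z_i\le W$ and let $M/Z_i$ be a minimal normal subgroup of $G/Z_i$ with $\mathrm{f}(M/Z_i,G)=1$; since $(M\cap W)/Z_i$ is either trivial or equal to $M/Z_i$, either $M\le W$ already, or $M\cap W=Z_i$ and then $MW/W\cong_G M/Z_i$, so the unique chief factor of $G$ strictly between $W$ and $MW$ is $G$-isomorphic to one of $\mathrm{f}$-value $1$; in both cases, together with the chief factors between $K$ and $W$, this makes $MW$ a $K$-good subgroup, whence $MW\le W$ and $M\le W$, so $Z_{i+1}\le W$. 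For the reverse inclusion, if $W\not\le Z_\infty$ then the nontrivial normal subgroup $WZ_\infty/Z_\infty$ of $G/Z_\infty$ contains a minimal normal subgroup $M/Z_\infty$; since $Z_\infty\le M\le WZ_\infty$, the modular law gives $M=(M\cap W)Z_\infty$, hence $M/Z_\infty\cong_G(M\cap W)/(W\cap Z_\infty)$, a chief factor of $G$ between $K$ and $W$, so $\mathrm{f}(M/Z_\infty,G)=1$ by $K$-goodness of $W$ and Definition~\ref{def1}(1), i.e.\ $M/Z_\infty\le\mathrm{Soc}_{\mathrm{f}}(G/Z_\infty)$, contradicting that $Z_\infty$ is a fixed point of the iteration (so $\mathrm{Soc}_{\mathrm{f}}(G/Z_\infty)=1$). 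Hence $Z_\infty=W$ and $Z_\infty/K=\mathrm{Z}(G/K,\mathrm{f})$. Since $Z_i\lneq Z_{i+1}$ before termination and $|Z_\infty|\le|G|\le n!$, the iteration runs at most $\log_2 n!=O(n\log n)$ times, each time in polynomial time; so the whole computation is polynomial in $n$.

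\emph{Main obstacle.} All of the above group theory is routine; the real work lies in the computational primitive used at each step: given a permutation group $G\le S_n$ and a normal subgroup $Z$, compute in polynomial time the preimage in $G$ of $\mathrm{Soc}(G/Z)$ together with an explicit decomposition into minimal normal subgroups, \emph{without} constructing any permutation representation of $G/Z$. This is precisely where the developed algorithmic theory of permutation groups is indispensable.
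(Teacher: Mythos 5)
Your argument is correct in its group theory but follows a genuinely different route from the paper. You build $\mathrm{Z}(G/K,\mathrm{f})$ \emph{from below} as the terminal member of an ascending ``$\mathrm{f}$-socle'' series $K=Z_0<Z_1<\dots$, in analogy with the upper central series; the paper instead fixes one chief series $K=G_0\trianglelefteq\dots\trianglelefteq G_m=G$ (via \ref{i5} of Theorem~\ref{Basic}) and computes the intersections $Z_i=G_i\cap\mathrm{Z}(G/K,\mathrm{f})$ one chief factor at a time. The paper's key observation (its Lemma~\ref{c}) is that the intersection grows at the step $G_{i-1}\le G_i$ only when $\mathrm{f}(G_i/G_{i-1},G)=1$ \emph{and} $G_i/Z_{i-1}$ splits as $G_{i-1}/Z_{i-1}\times Z/Z_{i-1}$ for some $Z\trianglelefteq G$, so the whole computation reduces to finding a single normal complement: in the non-abelian case it is recovered as $C_{G_i/Z_{i-1}}(G_{i-1}/Z_{i-1})^{\mathfrak S}$ (just a centralizer and a derived series), and in the abelian case by decomposing the $G$-module $\Omega_1(\mathrm{Z}(\mathrm{O}_p(G_i/Z_{i-1})))$ into indecomposables via \cite{Chistov1997PolynomialTA} and \cite{Ronyai1990}. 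Your well-definedness and correctness arguments (the two inclusions $Z_\infty\le W$ and $W\le Z_\infty$, the modular-law step, the use of Definition~\ref{def1}(1) across $G$-isomorphisms) are all sound, and your iteration count is bounded as you say (or more simply by Lemma~\ref{chain}).

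The one genuine gap is exactly the step you flag yourself: you assume as a black box that, for $Z\trianglelefteq G\le S_n$, one can compute $\mathrm{Soc}(G/Z)$ together with a decomposition into minimal normal subgroups (equivalently, its homogeneous components) in polynomial time, citing \cite{Seress2003}, which treats permutation groups rather than their quotients. This is not a routine citation: the abelian part of the socle can have exponentially many minimal normal subgroups, so you must argue that a homogeneous decomposition suffices and that it is computable --- which requires the quotient-group machinery of \cite{Kantor1990a} for the non-abelian part and the module-decomposition algorithms of \cite{Ronyai1990, Chistov1997PolynomialTA} for the abelian part. The paper's chief-series route is engineered precisely to avoid computing the full socle of each quotient: at every stage it needs only one complement, obtained from the elementary primitives of Theorem~\ref{Basic} plus a single module decomposition. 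So your proof becomes complete once you replace the appeal to ``standard algorithms'' by these specific tools; as written, the theorem's computational claim is not yet established.
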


In section \ref{appl} we will discuss how this theorem can be applied to various formations.

\section{Preliminaries}
All unexplained notations and terminologies are standard. The reader is referred to \cite{s9, Doerk1992, Guo2015} if necessary. Recall that
$\mathrm{Z}(G)$ denotes the center of $G$; $\mathrm{O}_p(G)$ is the greatest normal $p$-subgroup of $G$; $\mathrm{O}^\pi(G)$ is the smallest normal subgroup of $G$  of $\pi$-index; $\Omega_1(G)$ denotes the subgroup that is generated by elements of order $p$ for a $p$-group   $G$; $S_n$ denotes the symmetric group of degree~$n$; $\mathfrak{G}_\pi$ and $\mathfrak{S}$ denote the classes of all $\pi$-groups and soluble groups respectively.

Recall that a \emph{formation} is a class of groups $\mathfrak{F}$ which is   closed  under taking epimorphic images (i.e. from $G\in\mathfrak{F}$ and $N\trianglelefteq G$ it follows that $G/N\in\mathfrak{F}$)  and subdirect products (i.e. from $G/N_1\in\mathfrak{F}$ and $G/N_2\in\mathfrak{F}$ it follows that $G/(N_1\cap N_2)\in\mathfrak{F}$). If $\mathfrak{F}$ is a non-empty formation, then in every group $G$ exists the $\mathfrak{F}$-residual, i.e. the smallest normal subgroup of $G$ with $G/G^\mathfrak{F}\in\mathfrak{F}$.

We use standard computational conventions of abstract finite groups equipped
with poly\-nomial-time procedures to compute products and inverses of elements (see \cite[Chapter 2]{Seress2003}).
For both input and output, groups are specified by generators. We will consider only $G=\langle S\rangle\leq S_n$ with $|S|\leq n^2$. If necessary, Sims' algorithm \cite[Parts 4.1 and 4.2]{Seress2003} can be used to arrange that $|S|\leq n^2$. Quotient groups are specified by generators of a group and its normal subgroup.
We need the following well known basic tools in our proofs (see, for example \cite{Kantor1990a} or \cite{Seress2003}).
Note that some of them are obtained mod CFSG.

\begin{thm}\label{Basic}
  Given $A, B\trianglelefteq G\leq S_n$ with $A\leq B$, in polynomial time one can solve the following problems:

  \begin{enumerate}

\item\label{i1}         Find $C_{G/A}(B/A)$. %\cite[P6(i)]{Kantor1990a}.

\item\label{i2} Find $G'$ and hence $G^\mathfrak{S}$.

    \item\label{i3}  Find $|G|$.

\item\label{i4} Find $\mathrm{Z}(G/A)$,
% \cite[P14(iii)]{Kantor1990a},
$\mathrm{O}_p(G/A)$ %\cite[P16(i)]{Kantor1990a}
    and $\mathrm{O}^\pi(G/A)$.
     % \cite[P16(ii)]{Kantor1990a}.

    \item\label{i5} Find a chief series for $G$ containing $A$ and $B$.

\item\label{i6} Given   $H=\langle S_1\rangle, K=\langle S_2\rangle \leq G$ find $\langle H, K\rangle=\langle S_1, S_2\rangle$ and $[H, K]=\langle \{[s_1, s_2]\mid s_1\in S_1, s_2\in S_2\rangle^{\langle H, K\rangle}$.

\item\label{i7} Check  if $G/A$ is simple.

% Given $H, K\leq G$ with $K\trianglelefteq G$ find $H\cap K$.

 %   \item Given $T\subseteq G$ find $\langle T\rangle^G$.

%    \item (mod CFSG) Given a prime $p$ dividing $|G|$, find a Sylow $p$-subgroup $P$ of $G$ \cite{Kantor1990}.

%\item   Chief series of $G$ .
  \end{enumerate}
\end{thm}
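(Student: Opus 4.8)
The plan is to reduce every one of the seven items to a small number of standard polynomial-time primitives for permutation groups and to cite the relevant sources, rather than to reinvent the algorithms. The master primitive is the Schreier--Sims algorithm \cite{Seress2003}, which in polynomial time produces a base and strong generating set for $G=\langle S\rangle\leq S_n$; from this one immediately reads off $|G|$ (which settles item \ref{i3}) and obtains a polynomial-time membership test. Two further primitives I would invoke as black boxes are the normal-closure algorithm (repeatedly conjugate a finite generating set and close it up under the membership test) and Kantor's polynomial-time Sylow-subgroup algorithm \cite{Kantor1990a}. For the ``mod $A$'' versions I would work inside $G/A$ using the quotient-group framework of \cite{Kantor1990a, Seress2003}, so that all the operations below may be carried out directly in $G/A$ while staying polynomial in $n$.

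Next I would dispatch the elementary items. For item \ref{i6}, $\langle H,K\rangle=\langle S_1\cup S_2\rangle$ is immediate, while $[H,K]$ is, by the displayed formula, the normal closure in $\langle H,K\rangle$ of the finitely many commutators $[s_1,s_2]$ of generators, computed by the normal-closure primitive. Taking $H=K=G$ gives $G'=[G,G]$; iterating, the derived series $G\geq G'\geq G''\geq\cdots$ stabilizes after at most $\log_2|G|=O(n\log n)$ steps at its terminal (perfect) term, which is exactly the soluble residual $G^{\mathfrak{S}}$, so item \ref{i2} follows. For the two non-centralizer parts of item \ref{i4}: $\mathrm{O}^\pi(G/A)$ is the normal closure of the Sylow $q$-subgroups for the $O(\log|G|)$ primes $q\notin\pi$ dividing $|G/A|$, each produced by the Sylow primitive; and $\mathrm{O}_p(G/A)$ is obtained from the known polynomial-time algorithm for the $p$-core of a permutation group (part of the soluble-radical machinery of \cite{Kantor1990a}).

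The substantive ingredient is item \ref{i1}, and here lies the main obstacle. The naive identity $C_G(B)=\bigcap_{b\in B}C_G(b)$ is useless, since centralizers of single elements are not known to be computable in polynomial time; the point is that the centralizer of a \emph{normal} subgroup, and more generally of a normal section $B/A$, \emph{is} polynomial-time computable by the deeper methods of \cite{Kantor1990a, Seress2003}, which I would cite directly to obtain $C_{G/A}(B/A)$, and, as the special case $B=G$, the centre $\mathrm{Z}(G/A)$, thereby completing item \ref{i4}. For item \ref{i5} I would refine the normal series $1\trianglelefteq A\trianglelefteq B\trianglelefteq G$ by repeatedly extracting a minimal normal subgroup of the relevant quotient, using the polynomial-time chief-series algorithm. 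Finally item \ref{i7} reduces to \ref{i5}: $G/A$ is simple precisely when $A$ is a maximal normal subgroup of $G$ (a single chief factor lies above $A$) and the resulting characteristically simple factor $S^k$ has $k=1$, which I would detect via the polynomial-time routines that identify composition factors. The genuine difficulty throughout is that items \ref{i1}, the $\mathrm{O}_p$ part of \ref{i4}, \ref{i5} and \ref{i7} (and Kantor's Sylow algorithm itself) are exactly the primitives that rest on the Classification of Finite Simple Groups; the work is therefore not in designing new algorithms but in correctly assembling and citing these CFSG-dependent tools and checking that the quotient-group reductions keep every step polynomial in $n$.
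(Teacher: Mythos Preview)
Your proposal is correct, and in fact considerably more detailed than what the paper does: the paper does not prove this theorem at all, but simply states it as a collection of ``well known basic tools'' with a blanket reference to \cite{Kantor1990a} and \cite{Seress2003} and the remark that some items are obtained mod CFSG. Your sketch is a faithful unpacking of exactly those citations, and your identification of which items (\ref{i1}, the $\mathrm{O}_p$ part of \ref{i4}, \ref{i5}, \ref{i7}, and the underlying Sylow algorithm) ultimately rest on CFSG matches the paper's caveat. One minor sharpening: for the bound on the length of the derived series (and on the number of chief factors in item \ref{i5}) you may as well invoke the paper's Lemma~\ref{chain}, which gives the cleaner bound $2n-3$ in place of your $O(n\log n)$.
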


The following lemma plays an important role in our proves.

\begin{lem}[\cite{Babai1986}]\label{chain}
   Given $G \leq S_n$ every chain of subgroups of $G$ has at most $2n-3$  members for $n\geq 2$.
\end{lem}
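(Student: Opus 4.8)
The plan is to isolate a single numerical invariant and bound it. A chain of subgroups $G_0>G_1>\dots>G_m$ has $m$ proper inclusions; write $\lambda(G)$ for the largest such $m$ over all chains in $G$. I will prove $\lambda(G)\le 2n-3$ for every $G\le S_n$ with $n\ge 2$, which is the assertion. The invariant $\lambda$ is monotone ($H\le G$ gives $\lambda(H)\le\lambda(G)$, since a chain in $H$ is a chain in $G$), so it would in principle suffice to treat $G=S_n$; nonetheless I would keep $G$ arbitrary and argue by strong induction on $n$. The one structural tool used throughout is the additivity of $\lambda$ along a normal subgroup: if $N\trianglelefteq G$ then $\lambda(G)=\lambda(N)+\lambda(G/N)$. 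For ``$\le$'', in a chain $G=G_0>\dots>G_m=1$ no step can have both $G_i\cap N=G_{i+1}\cap N$ and $G_iN=G_{i+1}N$, since Dedekind's identity would then force $G_i=G_{i+1}$; hence each step is strict in the chain induced in $N$ or in the chain induced in $G/N$. For ``$\ge$'' one concatenates optimal chains. In particular $\lambda(A\times B)=\lambda(A)+\lambda(B)$, and for a wreath product $\lambda(A\wr B)=c\,\lambda(A)+\lambda(B)$ when $B\le S_c$.

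With these facts the two combinatorial reductions become routine and, crucially, leave slack. If $G$ is intransitive with orbits of sizes $n_1,\dots,n_k$ ($k\ge 2$), then $G$ embeds subdirectly in the product of its orbit images, so by the induction hypothesis $\lambda(G)\le\sum_i\lambda(S_{n_i})\le\sum_i(2n_i-3)=2n-3k\le 2n-6$. If $G$ is transitive but imprimitive with $n/b$ blocks of size $b$ ($1<b<n$), then $G\le S_b\wr S_{n/b}$, and taking $N$ to be the kernel of the action on blocks gives $\lambda(G)\le(n/b)(2b-3)+\bigl(2(n/b)-3\bigr)=2n-(n/b)-3\le 2n-5$. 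Thus the whole difficulty is concentrated in the primitive case.

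For primitive $G\le S_n$ there are two regimes. If $G$ does not contain $A_n$, then $G$ is small: by the classification-dependent order bounds of Praeger--Saxl and Maróti one has $|G|<n^{1+\lfloor\log_2 n\rfloor}$ outside explicit product-action and Mathieu exceptions, so the trivial estimate $\lambda(G)\le\log_2|G|$ (each upward step at least doubles the order) already falls far below $2n-3$; the finitely many exceptional and small-degree primitive groups are checked by hand. The genuinely hard primitive groups are $G=A_n$ and $G=S_n$, where $\log_2|G|$ is of order $n\log n$ and the crude bound is useless. Here I would argue through maximal subgroups: $\lambda(S_n)=1+\lambda(A_n)$ since $A_n\trianglelefteq S_n$ has index $2$, and a longest chain in $A_n$ first descends into some maximal subgroup $M<A_n$. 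By the O'Nan--Scott description every such $M$ (including the point stabiliser $A_{n-1}$) is intransitive, imprimitive, or primitive-and-small, i.e. of a reducible type already treated, whence $\lambda(M)\le 2n-5$; therefore $\lambda(A_n)\le 2n-4$ and $\lambda(S_n)\le 2n-3$. Combining the cases closes the induction, and monotonicity gives $\lambda(G)\le\lambda(S_n)\le 2n-3$ for every $G\le S_n$.

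The main obstacle is exactly the primitive case. The orbit and block reductions are elementary and leave room to spare, but controlling the primitive groups that do not contain $A_n$ requires the classification-based order estimates, and extracting the \emph{sharp} constant $2$ for $A_n$ and $S_n$ themselves forces the passage through their maximal subgroups rather than the order bound. One must also verify by hand the small degrees together with the sporadic and product-action exceptions to which the asymptotic estimates do not yet apply. I note finally that the exact maximum is known to be $\lceil 3n/2\rceil-b(n)-1$ (with $b(n)$ the number of ones in the binary expansion of $n$), which is $\le 2n-3$; the weaker bound above is all that is needed, and its derivation is somewhat less delicate.
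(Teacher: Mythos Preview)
The paper gives no proof of this lemma; it is simply cited from \cite{Babai1986} and used as a black box. Your reconstruction is essentially the standard argument from that source: additivity of the chain-length $\lambda$ along a normal subgroup, the orbit and block reductions for the intransitive and imprimitive cases (which invoke the inductive hypothesis on smaller degrees and leave slack), the order bound $\lambda(G)\le\log_2|G|$ combined with a Praeger--Saxl/Mar\'oti estimate for primitive groups not containing $A_n$, and finally the descent through a maximal subgroup of $A_n$ (and then $S_n$) to close the induction. This is correct, modulo the finite verifications you already flag. One small point to tidy: in the intransitive case you write $\lambda(S_{n_i})\le 2n_i-3$, which needs $n_i\ge 2$; fixed points should be stripped off first (embedding $G$ into $S_{n-1}$ and inducting), but this only strengthens the conclusion.
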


\section{Proof of Theorem \ref{thm1}}

The proof of Theorem \ref{thm1} is based on the following 4 lemmas.

\begin{lem}\label{a}
The following statements hold:

$(1)$  $\mathrm{Z}(G,\mathrm{f})$ is well defined for any group $G$.

$(2)$ Let $Z/K=\mathrm{Z}(G/K,\mathrm{f})$. Then $Z$ is the greatest normal subgroup of $G$ such that it contains $K$ and $\mathrm{f}(H/T, G)=1$ for every chief factor $H/T$ of $G$ with $K\leq T\leq H\leq Z$.
\end{lem}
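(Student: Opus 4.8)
The plan is to verify both statements directly from the definitions of the chief factor function $\mathrm{f}$ (Definition~\ref{def1}) and of the subgroup $\mathrm{Z}(G,\mathrm{f})$. For part $(1)$, the only issue is that "the greatest normal subgroup of $G$ all of whose chief factors below it satisfy $\mathrm{f}(\cdot,G)=1$" must actually exist, i.e. the family of such normal subgroups must be closed under the product of two members. So first I would fix two normal subgroups $M, N \trianglelefteq G$ with the property that every chief factor of $G$ lying below $M$ (resp. below $N$) has $\mathrm{f}$-value $1$, and aim to show the same holds for $MN$. The natural tool is to refine a chief series of $G$ passing through $N$ and $MN$: the chief factors below $N$ are covered by hypothesis, and each chief factor $H/T$ with $N \le T \le H \le MN$ is $G$-isomorphic, via $MN/N \cong M/(M\cap N)$, to a chief factor of $G$ lying between $M\cap N$ and $M$, hence below $M$; more carefully, one uses that $(H/T)$ with $H \le MN$ corresponds under the isomorphism theorems to a chief factor of $G$ inside $M$, and then invokes property~(1) of Definition~\ref{def1} ($G$-isomorphic chief factors get the same $\mathrm{f}$-value). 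Since the empty group also qualifies, the set is nonempty, and by Lemma~\ref{chain} (or just finiteness) it has a greatest element; that element is $\mathrm{Z}(G,\mathrm{f})$, which is therefore well defined.

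For part $(2)$, write $Z/K = \mathrm{Z}(G/K,\mathrm{f})$, so by definition $Z/K$ is the greatest normal subgroup of $G/K$ all of whose chief factors $(H/K)/(T/K)$ satisfy $\mathrm{f}((H/K)/(T/K), G/K) = 1$. I would first translate this back to $G$: the chief factors of $G/K$ below $Z/K$ are exactly the factors $(H/K)/(T/K)$ with $K \le T \le H \le Z$ and $H/T$ a chief factor of $G$; and by property~(2) of Definition~\ref{def1} (the shift-by-$N$ compatibility, applied with $N = K$), we have $\mathrm{f}((H/K)/(T/K), G/K) = \mathrm{f}(H/T, G)$ for each such factor. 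Hence the condition defining $Z/K$ is precisely that $\mathrm{f}(H/T, G) = 1$ for every chief factor $H/T$ of $G$ with $K \le T \le H \le Z$. Maximality transfers along the inclusion-preserving, surjective correspondence between normal subgroups of $G$ containing $K$ and normal subgroups of $G/K$: a normal subgroup $W \trianglelefteq G$ with $K \le W$ satisfies "$\mathrm{f}(H/T,G)=1$ for all chief factors $H/T$ of $G$ with $K \le T \le H \le W$" if and only if $W/K$ lies in the family defining $\mathrm{Z}(G/K,\mathrm{f})$; so the largest such $W$ is exactly $Z$, which is the assertion.

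The main obstacle is the closure-under-products argument in part $(1)$: one must be careful that "chief factor of $G$ below $M$" really is preserved under the $G$-isomorphism $MN/N \cong M/(M \cap N)$, i.e. that a chief factor of $G$ sitting between $N$ and $MN$ pulls back to a chief factor of $G$ (not merely a normal section) sitting inside $M$, and that this $G$-isomorphism is the right notion to feed into property~(1) of Definition~\ref{def1}. This is a standard Jordan--Hölder-type refinement argument — refine a chief series of $G$ through $N$ and $MN$ and compare it with one through $M\cap N$ and $M$ — but it is where all the actual content of the lemma lives; part $(2)$ is then purely bookkeeping with the correspondence theorem and property~(2).
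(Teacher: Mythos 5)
Your proposal is correct and follows essentially the same route as the paper: part $(1)$ is the closure-under-products argument via the isomorphism theorem (every chief factor of $G$ below $MN$ is $G$-isomorphic to one below $M$ or below $N$) combined with property $(1)$ of Definition~\ref{def1}, and part $(2)$ is the transfer of the defining condition and of maximality through the correspondence theorem using property $(2)$ of Definition~\ref{def1}. The only quibble is the phrase ``the empty group also qualifies,'' which should read ``the trivial subgroup''; this does not affect the argument.
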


\begin{proof}
$(1)$  Let $M$ and $N$ be normal subgroups of a group $G$. Then form \cite[A, The Isomorphism Theorems(2)]{Doerk1992} it follows that every chief factor of $G$ below $MN$ is $G$-isomorphic to a chief factor of $G$ below either $M$ or $N$. So if $\mathrm{f}(H/K, G)=1$ for every chief factor $H/K$ of $G$ below $M$ and $N$, then $\mathrm{f}(H/K, G)=1$ for every chief factor $H/K$ of $G$ below $MN$ by $(1)$ of Definition \ref{def1}. It means that the subgroup $\mathrm{Z}(G,\mathrm{f})$ is well defined.

$(2)$ Let $Z$ be the greatest normal subgroup of $G$ such that it contains $K$ and $\mathrm{f}(H/T, G)=1$ for every chief factor $H/T$ of $G$ with $K\leq T\leq H\leq Z$. Now $\mathrm{f}((H/K)/(T/K), G/K)=\mathrm{f}(H/T, G)=1$ for every chief factor $H/T$ of $G$ such that  $(H/K)/(T/K)$ is a chief factor of $G/K$ below $Z/K$ by $(2)$ of Definition \ref{def1}. Hence $Z/K\leq \mathrm{Z}(G/K,\mathrm{f})$. Assume that $\mathrm{f}((M/K)/(Z/K), G/K)=1$ for some chief factor $(M/K)/(Z/K)$ of $G/K$. Then $\mathrm{f}(M/Z, G)=1$ for some chief factor $M/Z$ of $G$ by $(2)$ of Definition \ref{def1}, a contradiction with the definition of $Z$. Thus $Z/K=\mathrm{Z}(G/K,\mathrm{f})$.
\end{proof}

\begin{lem}\label{c}
  Let $H/T$ be a chief factor of $G$, $K\trianglelefteq G$, $K\leq T$, $Z_1/K= \mathrm{Z}(G/K,\mathrm{f})\cap T/K$ and
  $Z_2/K= \mathrm{Z}(G/K,\mathrm{f})\cap H/K$. Then $Z_2/Z_{1}\not\simeq 1$ if and only if $\mathrm{f}(H/T, G)=1$ and $H/Z_{1}=T/Z_{1}\times Z/Z_{1}$ for some $Z\trianglelefteq G$. In this case $Z_2=Z$.
\end{lem}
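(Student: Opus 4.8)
The plan is to reduce everything to statements about normal subgroups of $G/K$. Write $Z^{*}/K=\mathrm{Z}(G/K,\mathrm{f})$; then $Z_{1}=Z^{*}\cap T$ and $Z_{2}=Z^{*}\cap H$ (all these groups being normal in $G$ and containing $K$), and since $T\le H$ one gets at once the identity
\[
Z_{2}\cap T=(Z^{*}\cap H)\cap T=Z^{*}\cap T=Z_{1},
\]
which is the workhorse of both directions. Throughout I may freely pass to $G/Z_{1}$ and use that $H/T$ is a chief factor.

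For the forward implication, suppose $Z_{2}/Z_{1}\ne 1$. Since $Z_{2}\cap T=Z_{1}<Z_{2}$ we have $Z_{2}\not\le T$, hence $Z_{2}T/T$ is a non-trivial normal subgroup of $G/T$ contained in the chief factor $H/T$, forcing $Z_{2}T=H$. Together with $Z_{2}\cap T=Z_{1}$ and the normality of $Z_{2}/Z_{1}$ and $T/Z_{1}$ in $G/Z_{1}$, this yields the internal direct decomposition $H/Z_{1}=T/Z_{1}\times Z_{2}/Z_{1}$, so one may take $Z=Z_{2}$. Projecting off the factor $T/Z_{1}$ gives a $G$-isomorphism $Z_{2}/Z_{1}\cong (H/Z_{1})/(T/Z_{1})\cong H/T$; in particular $Z_{2}/Z_{1}$ is a chief factor of $G$. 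Because $Z_{2}/K\le\mathrm{Z}(G/K,\mathrm{f})$ and $K\le Z_{1}\le Z_{2}$, Lemma \ref{a}$(2)$ gives $\mathrm{f}(Z_{2}/Z_{1},G)=1$, and then $\mathrm{f}(H/T,G)=1$ by clause $(1)$ of Definition \ref{def1}.

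For the reverse implication, suppose $\mathrm{f}(H/T,G)=1$ and $H/Z_{1}=T/Z_{1}\times Z/Z_{1}$ with $Z\trianglelefteq G$. Since $H\ne T$ we have $Z\ne Z_{1}$, and since the two factors meet trivially we have $Z\not\le T$; projecting off $T/Z_{1}$ again gives a $G$-isomorphism $Z/Z_{1}\cong H/T$, so $Z/Z_{1}$ is a chief factor with $\mathrm{f}(Z/Z_{1},G)=\mathrm{f}(H/T,G)=1$. Now I claim $Z\le Z^{*}$: a chief series of $G$ from $K$ to $Z$ may be chosen through $Z_{1}$, the factors below $Z_{1}$ all satisfy $\mathrm{f}=1$ (because $Z_{1}\le Z^{*}$ and Lemma \ref{a}$(2)$), and the unique remaining factor $Z/Z_{1}$ satisfies $\mathrm{f}=1$; by the Jordan--H\"older theorem for chief series together with clause $(1)$ of Definition \ref{def1} this covers every chief factor of $G$ between $K$ and $Z$, so $Z$ is one of the subgroups admitted by the maximality statement of Lemma \ref{a}$(2)$, whence $Z\le Z^{*}$. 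As also $Z\le H$, we get $Z/K\le\mathrm{Z}(G/K,\mathrm{f})\cap H/K=Z_{2}/K$, i.e. $Z\le Z_{2}$; since $Z>Z_{1}$ this already shows $Z_{2}/Z_{1}\ne1$. Finally $Z_{2}\cap T=Z_{1}$ and $Z_{2}\ge Z\not\le T$ give $Z_{2}\not\le T$, hence $Z_{2}T=H$ as before, so $|Z_{2}:Z_{1}|=|H:T|=|Z:Z_{1}|$; combined with $Z\le Z_{2}$ this forces $Z=Z_{2}$.

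The one step that is more than bookkeeping is the implication ``$\mathrm{f}=1$ on all factors of one chief series from $K$ to $Z$'' $\Rightarrow$ ``$Z/K\le\mathrm{Z}(G/K,\mathrm{f})$'': it rests on the $G$-version of the Jordan--H\"older theorem together with clause $(1)$ of Definition \ref{def1}, which make the hypothesis independent of the chosen series, and then on the maximality characterisation in Lemma \ref{a}$(2)$. The remaining work --- the two direct-product identifications inside $G/Z_{1}$ and the index count $|Z_{2}:Z_{1}|=|H:T|$ --- is a routine application of the modular law and of the structure of normal subgroups of a direct product.
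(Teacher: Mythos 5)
Your proof is correct and follows essentially the same route as the paper's: the identity $Z_1=Z_2\cap T$, the embedding $Z_2/Z_1\simeq Z_2T/T\le H/T$ forcing $Z_2/Z_1$ to be trivial or a copy of the chief factor, the internal direct decomposition in the forward direction, and the observation $Z/Z_1\simeq H/T$ with $\mathrm{f}(Z/Z_1,G)=1$ giving $Z\le\mathrm{Z}(G/K,\mathrm{f})$ in the reverse direction. The only differences are cosmetic: you spell out the Jordan--H\"older/maximality step behind ``$Z\le\mathrm{Z}(G/K,\mathrm{f})$'' and close with an index count $|Z_2:Z_1|=|H:T|=|Z:Z_1|$, where the paper instead cites the dichotomy on $Z_2/Z_1$ directly.
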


\begin{proof}
  Note that $Z_{1}/K=Z_2/K\cap T/K=(Z_2\cap T)/K$. Hence $Z_1=Z_2\cap T$. Now
  $$Z_2/Z_{1}=Z_2/(Z_2\cap T)\simeq Z_2T/T\leq H/T.$$
Hence $Z_2/Z_{1}$ is $G$-isomorphic to either 1 or $H/T$.

 Assume that $Z_2/Z_1\not\simeq 1$.  It means that $1=\mathrm{f}(Z_2/Z_1, G)=\mathrm{f}(H/T, G)$ and $Z_2/Z_1$ is a minimal normal subgroup of $H/Z_{1}$ not contained in $T/Z_{1}$. Hence  $H/Z_{1}=T/Z_{1}\times Z_2/Z_{1}$.

 Assume now that $\mathrm{f}(H/T, G)=1$ and $H/Z_{1}=T/Z_{1}\times Z/Z_{1}$ for some $Z\trianglelefteq G$. Note that $\mathrm{f}(A/B, G)=1$ for every chief factor $A/B$ of $G$ between $Z_{1}$ and $K$. From $H/T=ZT/T\simeq Z/(Z\cap T)=Z/Z_{1}$ it follows that $\mathrm{f}(Z/Z_{1}, G)=1$. It means that $$Z_{1}/K<Z/K\leq \mathrm{Z}(G/K,\mathrm{f})\cap H/K=Z_2/K.$$ Therefore $Z_2/Z_{1}\not\simeq 1$ and $Z=Z_2$.
\end{proof}

\begin{lem}\label{d1}
  In the notations of Lemma \ref{c} if $H/T$ is non-abelian and $Z_1$ is given, then we can compute $Z_2$ in polynomial time.
\end{lem}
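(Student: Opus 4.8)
The plan is to turn the criterion of Lemma \ref{c} into an algorithm. First I would compute the value $\mathrm{f}(H/T,G)$, which takes polynomial time by the hypothesis of Theorem \ref{thm1}. If $\mathrm{f}(H/T,G)=0$, then by Lemma \ref{c} we have $Z_2/Z_1\simeq 1$, and since $T\leq H$ forces $Z_1=\mathrm{Z}(G/K,\mathrm{f})\cap T/K\leq\mathrm{Z}(G/K,\mathrm{f})\cap H/K=Z_2$ as subgroups of $G$, we conclude $Z_2=Z_1$. So from now on assume $\mathrm{f}(H/T,G)=1$; by Lemma \ref{c} it remains to decide whether $T/Z_1$ has a $G$-invariant complement $Z/Z_1$ in $H/Z_1$, in which case $Z_2=Z$, and to output $Z_2=Z_1$ otherwise.

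The crucial structural point, where non-abelianness of $H/T$ is used, is that such a complement, if it exists, is unique and given by an explicit formula. Write $\overline{G}$, $\overline{H}$, $\overline{T}$ for the images of $G,H,T$ modulo $Z_1$ (note $K\leq Z_1\trianglelefteq G$), so that $\overline{H}/\overline{T}\cong H/T$ is a non-abelian chief factor of $\overline{G}$, hence a direct product of non-abelian simple groups and in particular perfect. Suppose $\overline{H}=\overline{T}\times\overline{W}$ with $\overline{W}\trianglelefteq\overline{G}$. Then $\overline{W}\leq C_{\overline{H}}(\overline{T})$, and since $\overline{W}\cap\overline{T}=1$ and $\overline{T}\,\overline{W}=\overline{H}$, Dedekind's modular law gives $C_{\overline{H}}(\overline{T})=\bigl(C_{\overline{H}}(\overline{T})\cap\overline{T}\bigr)\times\overline{W}=\mathrm{Z}(\overline{T})\times\overline{W}$. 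Passing to derived subgroups and using that $\overline{W}$ is perfect while $\mathrm{Z}(\overline{T})$ is abelian yields $\overline{W}=\bigl(C_{\overline{H}}(\overline{T})\bigr)'$. Hence the only candidate for $Z$ is the subgroup of $H$ whose image modulo $Z_1$ is $\bigl(C_{H/Z_1}(T/Z_1)\bigr)'$.

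The algorithm then reads as follows. Apply Theorem \ref{Basic}(\ref{i1}) to the group $H$ with $A=Z_1$ and $B=T$ to obtain $C:=C_{H/Z_1}(T/Z_1)$ (as the preimage in $H$); this $C$ is in fact normal in $G$, being the preimage of $C_{G/Z_1}(T/Z_1)\cap H/Z_1$. Using Theorem \ref{Basic}(\ref{i2}) and (\ref{i6}), form $Z:=\langle C',Z_1\rangle$, which is normal in $G$ (since $C'\trianglelefteq G$ and $Z_1\trianglelefteq G$) and satisfies $Z/Z_1=(C/Z_1)'$. Finally test whether $\overline{H}=\overline{T}\times\overline{Z}$: compute $\langle T,Z\rangle$ by Theorem \ref{Basic}(\ref{i6}) and verify the order equalities $|\langle T,Z\rangle|=|H|$ (equivalently $TZ=H$, as $T,Z\leq H$) and $|T|\,|Z|=|H|\,|Z_1|$ by Theorem \ref{Basic}(\ref{i3}); the latter is equivalent to $T\cap Z=Z_1$, since $Z_1\leq T\cap Z$ and, once the first equality holds, $|T\cap Z|=|T|\,|Z|/|H|$. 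If both hold, output $Z_2:=Z$; otherwise output $Z_2:=Z_1$. Correctness is immediate from Lemma \ref{c} and the uniqueness established above, and every step is polynomial in $n$ by Theorem \ref{Basic} and the hypothesis of Theorem \ref{thm1}.

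I expect the main obstacle to be the structural observation of the second paragraph — recognizing that a $G$-invariant complement to $T/Z_1$ in $H/Z_1$ is forced to equal $\bigl(C_{H/Z_1}(T/Z_1)\bigr)'$. This is precisely the step that breaks down when $H/T$ is abelian, where such a complement is in general neither unique nor expressible by a canonical formula, which is presumably why the abelian case is handled by a separate lemma.
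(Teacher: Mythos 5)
Your proof is correct and follows essentially the same route as the paper: both identify the unique candidate complement to $T/Z_1$ inside $C_{H/Z_1}(T/Z_1)$, using that a non-abelian chief factor is perfect, and then reduce the decision to order computations. The only difference is minor: the paper takes the soluble residual $C_{H/Z_1}(T/Z_1)^{\mathfrak{S}}$ as the candidate and tests the single order equality $|C/Z_1|=|H/T|$, whereas you take the derived subgroup $\bigl(C_{H/Z_1}(T/Z_1)\bigr)'$ and verify the complement property directly; the two candidates coincide whenever a complement exists, and both procedures run in polynomial time by Theorem \ref{Basic}.
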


\begin{proof}
  Assume that $H/T$ is non-abelian. If $\mathrm{f}(H/T, G)=0$, then $Z_1=Z_2$ by Lemma \ref{c}. So we can assume that $\mathrm{f}(H/T, G)=1$. Let $C/Z_1=C_{H/Z_{1}}(T/Z_{1})^\mathfrak{S}$. We claim that $Z_2/Z_1\not\simeq 1$ iff $|C/Z_1|=|H/T|$. In this case if  $Z_2/Z_1\not\simeq 1$, then  $Z_2/Z_1=C/Z_1$.

  Suppose that  $Z_2/Z_1\not\simeq 1$. From Lemma \ref{c} it follows that  $H/Z_{1}=T/Z_{1}\times Z/Z_{1}$ for some $Z\trianglelefteq G$. Then $Z/Z_{1}\leq C_{H/Z_{1}}(T/Z_{1})$. Hence
 \begin{multline*}
  C_{H/Z_{1}}(T/Z_{1})=TZ/Z_{1}\cap C_{H/Z_{1}}(T/Z_{1})=
  (C_{H/Z_{1}}(T/Z_{1})\cap T/Z_{1})(Z/Z_{1})=\mathrm{Z}(T/Z_{1})\times(Z/Z_{1}).\end{multline*}
  Recall that $H/T\simeq Z/Z_1$ is a direct product of simple non-abelian groups. It means that $ C_{H/Z_{1}}(T/Z_{1})^\mathfrak{S}=Z/Z_{1}=Z_2/Z_1$ by Lemma \ref{c}. Thus $|C/Z_1|=|H/T|$.

Suppose that $|C/Z_{1}|=|H/T|$. Note that $C/Z_{1}$ is non-abelian (because it is the soluble residual)  and $C/Z_1\cap T/Z_1\leq \mathrm{Z}(T/Z_1)$ is abelian. Therefore $C/Z_{1}\not\leq T/Z_{1}$.  From
$$C/Z_1\textrm{ char }C_{H/Z_{1}}(T/Z_{1})=C_{G/Z_{1}}(T/Z_{1})\cap H/Z_1\trianglelefteq G/Z_1$$
it follows that $C/Z_1\trianglelefteq G$.
It means that $H=TC$. Note that $H/T=CT/T\simeq C/(C\cap T)$. From  $|C/Z_{1}|=|H/T|$   it follows that $|C\cap T|=|Z_{1}|$. Note that $Z_1\leq C\cap T$. Thus  $C\cap T=Z_{1}$. It means that $H/Z_{1}=T/Z_{1}\times C/Z_{1}$. Thus $C/Z_1=Z_2/Z_1$ by Lemma \ref{c}.

By the statement of Theorem \ref{thm1} we can compute $\mathrm{f}(H/T, G)$ in polynomial time. Now $C/Z_1=C_{H/Z_{1}}(T/Z_{1})$ can be computed in polynomial time by \ref{i1} of Theorem \ref{Basic}. Note that $(C/Z_1)^\mathfrak{S}=C^\mathfrak{S}Z_1/Z_1$. Now we can compute $C^\mathfrak{S}$ by \ref{i2} of Theorem \ref{Basic}. According to \ref{i3} of Theorem \ref{Basic}  we can check if $|H/T|=|C/Z_1|$ in polynomial time.
\end{proof}

\begin{lem}\label{d2}
  In the notations of Lemma \ref{c} if $H/T$ is abelian and $Z_1$ is given, then we can compute $Z_2$ in polynomial time.
\end{lem}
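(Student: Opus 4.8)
The plan is to reduce the computation of $Z_2$ to deciding whether a submodule of a module over $\mathbb{F}_p$ (for the relevant prime $p$) is a direct summand, and, if so, splitting it off. By the hypothesis of Theorem~\ref{thm1} we can compute $\mathrm{f}(H/T,G)$ in polynomial time; if it equals $0$, then $Z_2=Z_1$ by Lemma~\ref{c} and we are done. So assume $\mathrm{f}(H/T,G)=1$, let $p$ be the prime with $H/T$ an elementary abelian $p$-group, and recall from Lemma~\ref{c} that it remains to decide whether $H/Z_1=T/Z_1\times Z/Z_1$ for some $Z\trianglelefteq G$ and, in that case, to produce this $Z$, which is then $Z_2$; note that Lemma~\ref{c} identifies \emph{every} such $Z$ with $Z_2$, so such a $Z$ is unique. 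Since this remaining question only concerns the quotient $G/Z_1$, which is handled by the standard quotient-group routines, we may assume $Z_1=1$ and look for a $G$-invariant complement to $T$ in $H$.

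First I would localize the search inside a small elementary abelian layer. If $Z\trianglelefteq G$ is a complement to $T$ in $H$, then $[Z,T]\le Z\cap T=1$, so $Z\le C:=C_H(T)=C_G(T)\cap H$; since $\mathrm{Z}(T)=C\cap T\le\mathrm{Z}(C)$, Dedekind's law shows that $Z$ is then a $G$-invariant complement to $\mathrm{Z}(T)$ in $C$, and conversely every $G$-invariant complement to $\mathrm{Z}(T)$ in $C$ is a $G$-invariant complement to $T$ in $H$ as soon as $CT=H$. As $C/\mathrm{Z}(T)$ embeds in $H/T$, the group $C$ is nilpotent of class at most $2$; replacing $C$ by $\mathrm{O}_p(C)$ and $\mathrm{Z}(T)$ by $\mathrm{O}_p(\mathrm{Z}(T))$ reduces us to a normal $p$-subgroup $C$ of $G$ carrying a normal subgroup $A\le\mathrm{Z}(C)$ with $C/A$ elementary abelian. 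A $G$-invariant complement $Z$ to $A$ in $C$ is then forced into the normal elementary abelian $p$-subgroup $V:=\Omega_1(\mathrm{Z}(C))$ and is exactly a $G$-invariant complement in $V$ to $W:=\Omega_1(A)=A\cap V$, provided $VA=C$. Every subgroup used here is computable in polynomial time for permutation groups: $C_G(T)$ and its intersection with the normal subgroup $H$, $\mathrm{Z}(T)$, the relevant $\mathrm{O}_p$'s, and $\Omega_1$ of an abelian normal $p$-group (the kernel of $x\mapsto x^p$, i.e. linear algebra in suitable coordinates) are all handled by Theorem~\ref{Basic} and routine methods.

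It remains to solve the module problem. Here $W\le V$ are $\mathbb{F}_p[G]$-modules with $\dim_{\mathbb{F}_p}V\le\log_p|G|=O(n\log n)$. Fixing coordinates on $V$ via a base and strong generating set, I would compute the matrices over $\mathbb{F}_p$ describing the conjugation action of the (at most $n^2$) generators of $G$ on $V$; expressing a conjugate in the chosen basis is a routine coordinate computation in an elementary abelian group. Now $W$ has a $G$-invariant complement in $V$ if and only if there is an $\mathbb{F}_p[G]$-linear map $\rho\colon V\to W$ with $\rho|_W=\mathrm{id}_W$, and the entries of such a $\rho$ satisfy a system of $\mathbb{F}_p$-linear equations whose numbers of unknowns and equations are polynomial in $n$. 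Solving it by Gaussian elimination either returns $\rho$, in which case $Z:=\ker\rho$ is the sought complement and, after undoing the reductions above and the passage to $G/Z_1$, equals $Z_2$; or it shows that no complement exists, in which case --- as also when $CT\ne H$ or $VA\ne C$ --- we output $Z_2=Z_1$.

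The main obstacle is the structural reduction of the second step: one must verify that the purely group-theoretic question whether $T$ has a $G$-invariant complement in $H$ is captured \emph{without loss} by a splitting question for an honest, polynomially bounded $\mathbb{F}_p[G]$-module --- this is where the interplay of $C_H(T)$, the passage to $p$-parts, and $\Omega_1$ of the centre must be carried out carefully --- and that each auxiliary subgroup and each change of coordinates used along the way stays within polynomial time. Once the reduction is in place, the final splitting test is standard linear algebra.
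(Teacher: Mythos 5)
Your proof is correct, and it diverges from the paper's argument at the decisive computational step. Both proofs first dispose of the case $\mathrm{f}(H/T,G)=0$ via Lemma~\ref{c} and then localize the putative $G$-invariant complement inside a normal elementary abelian layer of the form $\Omega_1(\mathrm{Z}(\mathrm{O}_p(\cdot)))$: the paper works directly with $P/Z_1=\Omega_1(\mathrm{Z}(\mathrm{O}_p(H/Z_1)))$, using that a minimal normal $p$-subgroup is centralized by $\mathrm{O}_p$ and generated by elements of order $p$, whereas you reach an equivalent layer through the longer chain $C_H(T)\to\mathrm{O}_p\to\Omega_1(\mathrm{Z}(\cdot))$, with the checks $CT=H$ and $VA=C$ correctly intercepting the configurations in which no complement can exist. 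The genuine difference is in how the resulting $\mathbb{F}_p[G]$-module splitting problem is solved. The paper decomposes the module into indecomposable summands by the polynomial-time algorithm of Chistov, Ivanyos and Karpinski, tests simplicity of the summands by R\'onyai's algorithm, and invokes the Krull--Remak--Schmidt theorem to argue that a simple summand not contained in $T/Z_1$ occurs in every indecomposable decomposition if it occurs in one. You instead observe that $W$ is a direct summand of the module $V$ exactly when the identity of $W$ extends to a $G$-equivariant retraction $\rho\colon V\to W$, which amounts to an affine linear system over $\mathbb{F}_p$ of polynomial size solvable by Gaussian elimination, $\ker\rho$ being the desired complement. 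Your route is more elementary --- it needs no module-decomposition machinery and no Krull--Remak--Schmidt --- and the complement it returns is automatically $G$-isomorphic to $H/T$, hence $G$-simple; both arguments rely, as you explicitly note, on Lemma~\ref{c} guaranteeing that \emph{any} $G$-invariant complement equals $Z_2$, so that producing one suffices.
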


\begin{proof}
  Assume that $H/T$ is abelian. If $\mathrm{f}(H/T, G)=0$, then $Z_1=Z_2$ by Lemma \ref{c}. So we can assume that $\mathrm{f}(H/T, G)=1$.

 Note that $H/T$   is a $p$-group for some prime $p$ and every minimal normal $p$-subgroup of a group is centralized by its the greatest normal $p$-subgroup \cite[A, Lemma 13.6]{Doerk1992}.  Also note that every minimal normal $p$-subgroup is generated by elements of order $p$. Hence all minimal normal $p$-subgroups of $H/Z_{1}$ are subgroups of $P/Z_{1}=\Omega_1(\mathrm{Z}(\mathrm{O}_p(H/Z_{1})))$.

 If $P\leq T$, then $H/Z_{1}\neq T/Z_{1}\times Z/Z_{1}$ for all $Z\trianglelefteq G$. Thus $Z_2=Z_1$ by Lemma \ref{c}.
 Assume now that $P\not\leq T$. Note that $P/Z_{1}$ is the direct product of $G$-indecomposable subgroups $P_1/Z_1,\dots, P_m/Z_1$ by the Krull-Remak-Schmidt Theorem. Note that $H/Z_{1}=T/Z_{1}\times Z/Z_{1}$ for some $Z\trianglelefteq G$ iff $P/Z_1=(P\cap T)/Z_1\times Z/Z_1$ for some $G$-simple subgroup $Z/Z_1$ of $P/Z_1$.
 Let\linebreak $(P\cap T)/Z_1$ be the direct product of $G$-indecomposable subgroups $T_1/Z_1,\dots, T_l/Z_1$. If the required $Z$ exists, then $P/Z_1$ is the direct product of $G$-indecomposable subgroups $T_1/Z_1,\dots, T_l/Z_1$ and $Z/Z_1$. Hence $l=m-1$. Moreover if $P/Z_{1}$ is the direct product of $G$-indecomposable subgroups $Q_1/Z_1,\dots, Q_m/Z_1$, then these subgroups can be numbered in such way that $P/Z_{1}$ is the direct product of $T_1/Z_1,\dots, T_{m-1}/Z_1$ and $Q_m/Z_1$ by the Krull-Remak-Schmidt Theorem. Note that in these case $Q_m/Z_1$ is $G$-simple and not contained in $T/Z_{1}$.
  Therefore the required subgroup $Z$ exists only in case when any direct decomposition of $P/Z_1$ into the product of $G$-indecomposable subgroups has $G$-simple subgroup not contained in $T/Z_1$.

  Note that $P/Z_{1}$ can be viewed as a $G$-module over $\mathbb{F}_p$ and   $G$-indecomposable and $G$-simple subgroups of $P/Z_{1}\leq G/Z_1$ are in one to one correspondence of indecomposable and simple submodules of   a $G$-module $P/Z_1$.

 From \ref{i4} of Theorem \ref{Basic} it follows that we can compute $Z(\mathrm{O}_p(H/Z_{1}))$ in polynomial time. Since $Z(\mathrm{O}_p(H/Z_{1}))$ is abelian, we can find $P/Z_1$ in polynomial time by taking the right powers of generators of $Z(\mathrm{O}_p(H/Z_{1}))$.
 According to \cite[p. 155]{Seress2003} for each generator $ g$ of $G$ in polynomial time we can find the linear transformation which this element induces on $P/Z_1$.  Let  $R$ be the algebra  generated by the above mentioned transformations. Now by \cite[Theorem 5]{Chistov1997PolynomialTA}  we can decompose $P/Z_1$  into the sum $P_1\oplus\dots\oplus P_k$  of indecomposable submodules in polynomial time.
 For each $P_i$ we can check either it is simple or not by \cite[Corollary 5.4]{Ronyai1990} and  for each simple $P_i$ we can find the subgroup $P_i/Z_1$ of $P/Z$ to which it corresponds. If $P_i/Z\not\leq T/Z$, then $P_i/Z$ is the required subgroup.
\end{proof}
%From the other hand   Now we are left to check if $P_i/Z_1\leq T/Z_1$.

\subsection{Proof of the Theorem}

We can  compute a chief series $K=G_0\trianglelefteq G_1\trianglelefteq\dots\trianglelefteq G_m=G$ of $G$  in polynomial time by \ref{i5} of Theorem \ref{Basic}.  Let $Z_i=G_i\cap \mathrm{Z}(G/K,\mathrm{f})$. We see that $Z_{i-1}=Z_i\cap G_{i-1}$ and $Z_0=K$.  Now using Lemmas \ref{d1} and \ref{d2} if we know $Z_{i-1}$ then we can compute $Z_i$ in polynomial time. Note that $m\leq 2n$ by Lemma \ref{chain}. Hence we can compute $\mathrm{Z}(G/K,\mathrm{f})$ in polynomial time (see Algorithm 1).

%\subsection{Proof of Corollary \ref{cor1.1}}

\begin{algorithm}[H]
\caption{GFHYPERCENTER$(G, K, \mathrm{f})$}
\SetAlgoLined
\KwResult{$\mathrm{Z}(G/K,\mathrm{f})$.}
\KwData{A normal subgroup $K$ of a  group $G$ and a chief factor function $\mathrm{f}$.}

$Z\gets K$\;
Compute a chief series $K=G_0\trianglelefteq G_1\trianglelefteq\dots\trianglelefteq G_m=G$ of $G$\;
\For{$i\in\{1,\dots, m\}$}
   {\If{$\mathrm{f}(G_i/G_{i-1}, G)=1$}
     {\eIf{$|\pi(G_i/G_{i-1})|>1$}
           {$C/Z\gets C_{G_i/Z}(G_{i-1}/Z)$\;
             $S/Z\gets \langle C^{\mathfrak{S}}, Z\rangle/Z$\;
             \If{$|S/Z|=|G_i/G_{i-1}|$}
                {$Z\gets S$\;}
           }
           {$p\gets\pi(G_i/G_{i-1})[1]$\;
             $P/Z\gets  \Omega_1(\mathrm{Z}(\mathrm{O}_p(G_i/Z)))$\;
              For each generator $ g$ of $G$ find the linear transformation which this element induces on $P/Z$\;
              Let  $R$ be the  algebra generated by above mentioned transformations. Decompose the $R$-module $P/Z$  into the sum $P_1\oplus\dots\oplus P_k$  of indecomposable submodules\;
                \For{$j\in\{1,\dots, k\}$}
                       {\If{$P_j$ is a simple module}
                          {Find the corresponding to $P_j$  subgroup $P_j/Z$\;
                           \If{$\langle P_j/Z, G_{i-1}/Z\rangle=G_i/Z$}
                                 {$Z\gets P_j$\;
                                  Leave the current \textbf{for} cycle\;}
                            }
                       }
                }
       }
       }
\Return{$Z/K$}
 \end{algorithm}

\section{Applications}\label{appl}

Recall \cite{Skiba2012} that  $\mathrm{Int}_\mathfrak{F}(G)$   denotes the intersection of all $\mathfrak{F}$-maximal subgroups of $G$.
L.A.~Shemet\-kov posed
the following question on Gomel Algebraic seminar in 1995: «For
what non-empty (normally) hereditary local (Baer-local) formations $\mathfrak{F}$ do
the equality $\mathrm{Int}_\mathfrak{F}(G) = \mathrm{Z}_\mathfrak{F}(G)$ hold for every group $G$?»
The solution to this question in case when $\mathfrak{F}$ is a hereditary saturated formation was obtained in \cite{Skiba2012}. For some class of Baer-local formation the answer to this question was given in \cite{Murashka2018}. Nevertheless this problem is steel open. We will show that Theorem~\ref{thm1} can be used to compute $\mathrm{Int}_\mathfrak{F}(G)$ for some formations $\mathfrak{F}$.

\subsection{Baer local and local formation}

The notion of $\mathfrak{F}$-hypercenter plays an important role in the study for Baer-local and local formations.
Let $f$ be a function which assigns to every simple group $J$ a possibly empty formation $f(J)$. Now extend the domain of $f$. If $G$ is the direct product of simple groups isomorphic to $J$, then  let $f(G)=f(J)$. If $J$ is a cyclic group of order $p$, then let $f(p)=f(J)$. Such functions $f$ are called  Baer functions \cite[IV, Definitions 4.9]{Doerk1992}.  A formation $\mathfrak{F}$ is called Baer-local (or composition, see \cite[p. 4]{Guo2015} and \cite{Shemetkov1974}) if for some Baer function $f$ holds:
$$\mathfrak{F}=(G\mid G/C_G(H/K)\in f(H/K)\textrm{ for every chief factor }H/K\textrm{ of }G).$$

\begin{thm}
  Let $\mathfrak{F}$ be a Baer-local  formation defined by $f$.
  Assume that  $(G/K)^{f(J)}$ can be computed in polynomial time for every  $K\trianglelefteq G\leq S_n$ and a simple group $J$. Then $\mathrm{Z}_\mathfrak{F}(G/K)$ can be computed in polynomial time for every  $K\trianglelefteq G\leq S_n$ and a simple group $J$.
\end{thm}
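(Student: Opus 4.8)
The plan is to reduce this theorem to an application of Theorem~\ref{thm1} by exhibiting a chief factor function $\mathrm{f}$ with $\mathcal{C}(\mathrm{f}) = \mathfrak{F}$ and $\mathrm{Z}(G,\mathrm{f}) = \mathrm{Z}_\mathfrak{F}(G)$, and then checking that this $\mathrm{f}$ is computable in polynomial time under the stated hypothesis. First I would define, for a group $G$ and a chief factor $H/K$ of $G$, $\mathrm{f}(H/K, G) = 1$ if and only if $G/C_G(H/K) \in f(H/K)$. One verifies that $\mathrm{f}$ is a chief factor function in the sense of Definition~\ref{def1}: condition~(1) holds because $G$-isomorphic chief factors have the same centralizer in $G$ and $f$ depends only on the isomorphism type of $H/K$ as a (direct power of a) simple group; condition~(2) holds because for $N \trianglelefteq G$ with $N \leq K$ one has $C_{G/N}((H/N)/(K/N)) = C_G(H/K)/N$, so $(G/N)/C_{G/N}((H/N)/(K/N)) \cong G/C_G(H/K)$, and again $f$ reads off only the isomorphism type of the factor, which is unchanged.

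Next I would identify $\mathcal{C}(\mathrm{f})$ with $\mathfrak{F}$: a group $G \ne 1$ lies in $\mathcal{C}(\mathrm{f})$ iff $G/C_G(H/K) \in f(H/K)$ for every chief factor $H/K$, which is precisely the defining condition for membership in the Baer-local formation $\mathfrak{F}$. Since chief factors of a group that are $\mathfrak{F}$-central in the Baer-local sense are exactly those with $\mathrm{f}(H/K,G)=1$, the subgroup $\mathrm{Z}(G,\mathrm{f})$ — the greatest normal subgroup all of whose chief factors below it have $\mathrm{f}$-value $1$ — coincides with $\mathrm{Z}_\mathfrak{F}(G)$ as defined via $G$-central/$G$-composition factors; here I would invoke that for Baer-local formations the $\mathfrak{F}$-central chief factors are the $\mathrm{f}$-central ones, so the two notions of hypercenter agree. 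The same identification applies verbatim to $G/K$ in place of $G$.

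It remains to check the polynomial-time computability of $\mathrm{f}(H/T, G)$ for $G \leq S_n$ and a chief factor $H/T$, which is the only real work. Given such a chief factor, by \ref{i1} of Theorem~\ref{Basic} one computes $C = C_G(H/T)$ in polynomial time, so $G/C$ is available as a quotient. The factor $H/T$ is a direct power $J^k$ of a simple group $J$; to evaluate $f(H/T)$ one first needs to know $J$. If $H/T$ is abelian it is an elementary abelian $p$-group, and $p$ (hence $J = C_p$) is read off from $|H/T|$ via \ref{i3} of Theorem~\ref{Basic}; if $H/T$ is non-abelian one identifies the isomorphism type of the simple composition factor $J$ (using the polynomial-time machinery for composition factors of permutation groups from \cite{Seress2003}, mod CFSG). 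Then $\mathrm{f}(H/T,G) = 1$ iff $(G/C)^{f(J)} = 1$, i.e. iff $G/C \in f(J)$; by hypothesis $(G/C)^{f(J)}$ can be computed in polynomial time, and testing triviality of a subgroup given by generators is polynomial via \ref{i3} of Theorem~\ref{Basic}. Hence $\mathrm{f}$ satisfies the hypothesis of Theorem~\ref{thm1}, and applying that theorem to $K \trianglelefteq G \leq S_n$ yields that $\mathrm{Z}(G/K,\mathrm{f}) = \mathrm{Z}_\mathfrak{F}(G/K)$ is well defined and computable in polynomial time. I expect the main obstacle to be the bookkeeping around identifying the simple group $J$ underlying a given chief factor (and confirming that this identification, together with evaluating the Baer function $f$ on $J$, stays polynomial) — the group-theoretic reductions via Theorem~\ref{Basic} are otherwise routine.
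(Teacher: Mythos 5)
Your reduction to Theorem~\ref{thm1} is the right strategy, and your computability analysis of the test $G/C_G(H/T)\in f(H/T)$ matches the paper's. But there is a genuine gap at the central identification: you claim that a chief factor $H/K$ is $\mathfrak{F}$-central iff $G/C_G(H/K)\in f(H/K)$, and hence that $\mathrm{Z}(G,\mathrm{f})=\mathrm{Z}_\mathfrak{F}(G)$ for your $\mathrm{f}$. That equivalence holds only when the Baer function $f$ is \emph{integrated} (i.e.\ $f(J)\subseteq\mathfrak{F}$ for all $J$), which the theorem does not assume. Since $T=(H/K)\rtimes(G/C_G(H/K))$ has $G/C_G(H/K)$ as a quotient and formations are quotient-closed, $T\in\mathfrak{F}$ forces $G/C_G(H/K)\in\mathfrak{F}$ as well; the correct criterion is $G/C_G(H/K)\in f(H/K)\cap\mathfrak{F}$. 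Concretely, let $\mathfrak{F}=\mathfrak{G}_3$ be defined by the Baer function with $f(3)=\mathfrak{G}$ and $f(J)=\emptyset$ otherwise, and let $G=S_3$. Then $G/C_G(C_3)\simeq C_2\in f(3)$, so your $\mathrm{f}$ gives $\mathrm{Z}(S_3,\mathrm{f})=C_3$, whereas $C_3\rtimes C_2\simeq S_3\notin\mathfrak{G}_3$ shows $\mathrm{Z}_{\mathfrak{G}_3}(S_3)=1$. So the subgroup your algorithm computes is not the $\mathfrak{F}$-hypercenter in general.

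The paper closes exactly this gap: it uses your $\mathrm{f}$ only to conclude $\mathfrak{F}=\mathcal{C}(\mathrm{f})$ and hence, by \cite[Theorem 1]{Murashka2024}, that $G^{\mathfrak{F}}$ is computable in polynomial time; it then applies Theorem~\ref{thm1} to the corrected function $\mathrm{f}_1(H/K,G)=1$ iff $G^{f(H/K)}$ is defined and $[G^{\mathfrak{F}}G^{f(H/K)},H]\subseteq K$, which tests $G/C_G(H/K)\in f(H/K)\cap\mathfrak{F}$ and does characterize $\mathfrak{F}$-centrality (via the analysis of the primitive group $T$ of type 1 or 3). Your argument can be repaired along the same lines, e.g.\ by replacing $f$ with the integrated function $J\mapsto f(J)\cap\mathfrak{F}$, whose residuals are $G^{f(J)}G^{\mathfrak{F}}$ --- but then the polynomial-time computability of $G^{\mathfrak{F}}$ is an extra ingredient that must be, and in your write-up is not, established. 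A minor further point: your test ``$\mathrm{f}(H/T,G)=1$ iff $(G/C)^{f(J)}=1$'' must also handle $f(J)=\emptyset$, where the residual is undefined and $\mathrm{f}$ must return $0$.
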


\begin{proof} Here we assume that if $f(J)=\emptyset$, then the computation of $(G/K)^{f(J)}$ returns ``is not defined''.
  Let $\mathrm{f}(H/K, G)=1$ iff $G/C_G(H/K)\in f(H/K)$. This condition is equivalent to $G^{f(H/K)}$ is defined and $[G^{f(H/K)}, H]\subseteq K$. Hence it can be checked in polynomial time by \ref{i3} and \ref{i6} of Theorem \ref{Basic} and the statement of the theorem. By analogy with the proof of \cite[Theorem 7]{Murashka2024}  one can show that $\mathrm{f}$ is a chief factor function.   Then $\mathfrak{F}=\mathcal{C}(\mathrm{f})$. Hence we can compute $G^\mathfrak{F}$ in polynomial time by \cite[Theorem 1]{Murashka2024}.

Let  $H/K$ be a    chief factor of $G$ and $T\simeq (H/K)\rtimes G/C_G(H/K)$. Note that $T$ is a primitive group of type 1 or 3. Then $T/C_T(H/K)\simeq G/C_G(H/K)$ by \cite[A, Theorem 15.2(1, 3)]{Doerk1992}.  So if $T\in\mathfrak{F}$, then
$G/C_G(H/K)\in f(H/K)\cap \mathfrak{F}$. From the other hand if $G/C_G(H/K)\in f(H/K)\cap \mathfrak{F}$, then $T\in\mathfrak{F}$. Note that $G/C_G(H/K)\in (f(H/K)\cap \mathfrak{F})$ iff $[G^\mathfrak{F}G^{f(H/K)}, H]\subseteq K$.
Let $\mathrm{f}_1(H/K, G)=1$ iff $G^{f(H/K)}$ is defined and $[G^\mathfrak{F}G^{f(H/K)}, H]\subseteq K$. Note that we can check this condition in polynomial time by \ref{i3} and \ref{i6} of Theorem \ref{Basic}, $\mathrm{f}_1$ is a chief factor function and $\mathrm{Z}(G, \mathrm{f}_1)=\mathrm{Z}_\mathfrak{F}(G)$ for any group $G$. Thus $\mathrm{Z}_\mathfrak{F}(G/K)$ can be computed in polynomial time for any $K\trianglelefteq G\leq S_n$ by Theorem~\ref{thm1}.
\end{proof}

A Baer-local formation defined by Baer function $f$  is called local if $f(J)=\cap_{p\in\pi(J)}f(p)$ for every simple group $J$.
In this case to define $\mathfrak{F}$ we need only to know the values of $f$ on primes.

\begin{cor}\label{cor1.1}
  Let $\mathfrak{F}$ be a local  formation locally defined by $f$.
  Assume that $(G/K)^{f(p)}$ can be computed in polynomial time for every  $K\trianglelefteq G\leq S_n$ and a prime $p$. Then $\mathrm{Z}_\mathfrak{F}(G/K)$ can be computed in polynomial time.
\end{cor}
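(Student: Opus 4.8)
\textbf{Proof proposal for Corollary \ref{cor1.1}.}

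The plan is to reduce the local case directly to the Baer-local theorem just proved. A local formation $\mathfrak{F}$ locally defined by $f$ (with $f$ defined on primes) is in particular a Baer-local formation: extend $f$ to a Baer function $\bar f$ by setting $\bar f(J) = \bigcap_{p \in \pi(J)} f(p)$ for every simple group $J$, and $\bar f(p) = f(p)$ on cyclic groups of prime order. By definition of ``local'', $\mathfrak{F}$ is exactly the Baer-local formation determined by $\bar f$, so it suffices to check that the hypothesis of the preceding theorem is met, namely that $(G/K)^{\bar f(J)}$ can be computed in polynomial time for every $K \trianglelefteq G \leq S_n$ and every simple group $J$.

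The key step is therefore to compute the residual with respect to a finite intersection of formations, given an algorithm for each factor. First I would note that for a chief factor $H/K$ of $G$, only finitely many primes are relevant: $\pi(H/K)$ has size at most $\log_2 |H/K| \le \log_2 |G| = O(n \log n)$ by \ref{i3} of Theorem \ref{Basic}, and the primes themselves can be read off from $|H/K|$, so the intersection $\bar f(H/K) = \bigcap_{p \in \pi(H/K)} f(p)$ ranges over a polynomially bounded set of primes. Next, for a finite family of formations $\mathfrak{F}_1, \dots, \mathfrak{F}_r$ one has $G^{\mathfrak{F}_1 \cap \cdots \cap \mathfrak{F}_r} = \langle G^{\mathfrak{F}_1}, \dots, G^{\mathfrak{F}_r} \rangle$: indeed $G$ modulo that join lies in each $\mathfrak{F}_i$ hence in the intersection, and it is the smallest such normal subgroup since any $N$ with $G/N$ in the intersection contains every $G^{\mathfrak{F}_i}$. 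Applying this with $\mathfrak{F}_i = f(p_i)$ over $p_i \in \pi(H/K)$, I compute each $(G/K)^{f(p_i)}$ by the hypothesis of the corollary, take their join inside $G/K$ using \ref{i6} of Theorem \ref{Basic}, and obtain $(G/K)^{\bar f(H/K)}$ in polynomial time. One subtlety to record: if some $f(p_i) = \emptyset$ then $\bar f(H/K) = \emptyset$ and the computation should return ``is not defined'', matching the convention of the Baer-local theorem.

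Having produced a polynomial-time procedure for $(G/K)^{\bar f(J)}$ for arbitrary simple $J$ — note that for non-abelian simple $J$ the value $\bar f(J) = \bigcap_{p \in \pi(J)} f(p)$ is handled identically — the hypothesis of the previous theorem is satisfied, and that theorem yields a polynomial-time computation of $\mathrm{Z}_\mathfrak{F}(G/K)$ for every $K \trianglelefteq G \leq S_n$, which is the assertion. The main obstacle is the bookkeeping in the second paragraph: one must be careful that the set of primes involved, the number of join operations, and the sizes of all intermediate generating sets stay polynomially bounded, and that the ``undefined'' case propagates correctly; but since $|\pi(H/K)|$ and the length of any subgroup chain (Lemma \ref{chain}) are both $O(n)$-bounded, no difficulty arises beyond routine verification.
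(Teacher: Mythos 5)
Your proposal is correct and takes essentially the route the paper intends: the corollary is stated without proof as an immediate consequence of the Baer-local theorem via the defining identity $f(J)=\bigcap_{p\in\pi(J)}f(p)$, and your observation that $(G/K)^{\mathfrak{F}_1\cap\cdots\cap\mathfrak{F}_r}=\langle (G/K)^{\mathfrak{F}_1},\dots,(G/K)^{\mathfrak{F}_r}\rangle$, together with the polynomial bound on $|\pi(J)|$ and the correct propagation of the ``not defined'' case, supplies exactly the details the paper leaves implicit.
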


According to \cite[the proof of Corollary 1]{Murashka2024}   for the class $\mathfrak{U}$ of supersoluble groups, the class $w\mathfrak{U}$  of widely supersoluble groups \cite{Vasilev2010}, the class $\mathfrak{N}\mathcal{A}$ of groups $G$ such that all Sylow subgroups of $G/\mathrm{F}(G)$ are abelian \cite{Vasilev2010}, the class $sm\mathfrak{U}$ of groups with submodular Sylow subgroups \cite{Vasilyev2015, Zimmermann1989}, the class of strongly supersoluble groups $s\mathfrak{U}$ \cite{Vasilyev2015} and the class $sh\mathfrak{U}$ of groups all of whose Schmidt subgroups are supersoluble \cite{Monakhov1995} we can  compute the described in the statement of Corollary \ref{cor1.1} $f(p)$-residuals   in polynomial time.

\begin{cor}
  Let  $\mathfrak{F}\in\{\mathfrak{U}, \mathrm{w}\mathfrak{U}, s\mathfrak{U}, sm\mathfrak{U}, \mathfrak{N}\mathcal{A}, sh\mathfrak{U}\}$. Then $\mathrm{Z}_\mathfrak{F}(G/K)$ can be computed in a polynomial time for every $K\trianglelefteq G\leq S_n$.  \end{cor}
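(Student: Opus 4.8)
The plan is to invoke Corollary~\ref{cor1.1}, so the whole task reduces to showing that, for each formation $\mathfrak{F}$ in the list, a local definition $f$ is available for which the $f(p)$-residual of an arbitrary quotient $G/K$ with $K\trianglelefteq G\leq S_n$ can be computed in polynomial time. The paragraph immediately preceding the statement already asserts, citing \cite[the proof of Corollary 1]{Murashka2024}, that for $\mathfrak{U}$, $w\mathfrak{U}$, $s\mathfrak{U}$, $sm\mathfrak{U}$, $\mathfrak{N}\mathcal{A}$ and $sh\mathfrak{U}$ the relevant $f(p)$-residuals are polynomial-time computable; so the body of the proof is essentially one line: apply Corollary~\ref{cor1.1} to each of these six formations. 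The only real content is to make sure the hypotheses of that corollary are literally met, i.e. that each of these classes is genuinely a local formation and that the local screen $f$ one uses is the one for which the cited residual-computation result applies.

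Concretely, first I would recall the standard local definitions: $\mathfrak{U}$ is locally defined by $f(p)=\mathfrak{G}_{p-1}$ (abelian groups of exponent dividing $p-1$, or equivalently the formation generated by cyclic groups of order dividing $p-1$); $w\mathfrak{U}$ and $\mathfrak{N}\mathcal{A}$ have the local screens exhibited in \cite{Vasilev2010}; $s\mathfrak{U}$, $sm\mathfrak{U}$ in \cite{Vasilyev2015}; and $sh\mathfrak{U}$ in \cite{Monakhov1995}. In each case $f(p)$ is a formation whose $\mathfrak{F}$-residual, evaluated at a permutation quotient $G/K$, is computed in \cite{Murashka2024} by a reduction to computing residuals of $\mathfrak{S}$, of $\mathfrak{G}_\pi$-type formations, and of abelian-exponent formations — all handled by items~\ref{i2}, \ref{i3}, \ref{i4}, \ref{i6} of Theorem~\ref{Basic} together with Lemma~\ref{chain}. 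Having pinned down these screens, the conclusion $\mathrm{Z}_\mathfrak{F}(G/K)$ is polynomial-time computable follows verbatim from Corollary~\ref{cor1.1}.

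The step I expect to be the main (and only genuine) obstacle is purely bookkeeping rather than mathematical: one must check that the local screen used to state ``$f(p)$-residuals are computable'' in the paragraph before the statement is the \emph{same} screen (up to the usual equivalence of local definitions) that Corollary~\ref{cor1.1} requires, and in particular that it is an \emph{integrated} and \emph{full} local definition if the cited computability argument tacitly relies on that. Since local formations admit a canonical (integrated, full) local definition and passing between local definitions does not affect the formation, this is routine; I would simply remark that in each case we take the canonical local definition and note it coincides (on primes) with the screen of \cite{Vasilev2010, Vasilyev2015, Monakhov1995} used in \cite{Murashka2024}. With that observation the proof is complete.
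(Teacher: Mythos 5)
Your proposal is correct and matches the paper's (implicit) argument exactly: the corollary is an immediate consequence of Corollary~\ref{cor1.1} together with the preceding paragraph's citation of \cite[the proof of Corollary 1]{Murashka2024} for the polynomial-time computability of the relevant $f(p)$-residuals. The extra bookkeeping you flag about matching screens is reasonable caution but the paper does not elaborate on it either.
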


  Recall \cite{Vasilev1997} that a group is called $c$-\emph{supersoluble} in the terminology of Vedernikov ($SC$-group in the terminology  of   Robinson \cite{Robinson2001}) if every its chief factor is a simple   group. It was proved the the class $\mathfrak{U}_c$ of all $c$-supersoluble groups is a Baer-local formation \cite{Vasilev1997}.

\begin{thm}
  In polynomial time one can compute $\mathrm{Int}_{\mathfrak{U}_c}(G/K)$ and $\mathrm{Z}_{\mathfrak{U}_c}(G/K)$ for $K\trianglelefteq G\leq S_n$.
\end{thm}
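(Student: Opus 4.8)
The plan is to realize both $\mathrm{Z}_{\mathfrak{U}_c}(G/K)$ and $\mathrm{Int}_{\mathfrak{U}_c}(G/K)$ as instances of $\mathrm{Z}(\,\cdot\,,\mathrm{f})$ for suitable chief factor functions and then invoke Theorem~\ref{thm1}. For the hypercenter, recall that $\mathfrak{U}_c$ is the Baer-local formation in which a chief factor $H/K$ is $\mathfrak{U}_c$-central in $G$ exactly when $H/K$ is a \emph{simple} group (abelian of prime order, or non-abelian simple). So I would set $\mathrm{f}(H/K,G)=1$ iff $H/K$ is simple; this is visibly a chief factor function (being simple is invariant under $G$-isomorphism and under passing to quotients by a normal subgroup contained in $K$), and $\mathcal{C}(\mathrm{f})=\mathfrak{U}_c$, $\mathrm{Z}(G,\mathrm{f})=\mathrm{Z}_{\mathfrak{U}_c}(G)$. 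By \ref{i7} of Theorem~\ref{Basic} one can test in polynomial time whether $H/K$ is simple, so Theorem~\ref{thm1} applies and yields $\mathrm{Z}_{\mathfrak{U}_c}(G/K)$ in polynomial time.

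\begin{sloppypar}
For $\mathrm{Int}_{\mathfrak{U}_c}(G)$ the approach is to use a known characterization that identifies it with the hypercenter of a \emph{different} (larger) chief factor function. The key external input is a result of the Skiba-type circle of ideas (adapted to $\mathfrak{U}_c$, as in \cite{Murashka2018}): for the Baer-local formation $\mathfrak{U}_c$ one has $\mathrm{Int}_{\mathfrak{U}_c}(G)=\mathrm{Z}(G,\mathrm{f}^*)$, where $\mathrm{f}^*(H/K,G)=1$ iff for \emph{every} group $X$ containing a minimal normal subgroup $G$-isomorphic to $H/K$ with $X/C_X(H/K)\simeq G/C_G(H/K)$, that minimal normal subgroup lies in $\mathrm{Int}_{\mathfrak{U}_c}(X)$ --- concretely this reduces to a condition on the pair $\big(H/K,\ G/C_G(H/K)\big)$ that can be read off from the composition type of $H/K$ and the action. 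I would first establish (or cite from \cite{Murashka2018}) that $\mathrm{f}^*$ is a genuine chief factor function and that $\mathrm{Z}(G,\mathrm{f}^*)=\mathrm{Int}_{\mathfrak{U}_c}(G)$; the abelian case recovers the classical ``cyclic chief factor'' condition tied to supersolubility, while the non-abelian case asks that the simple non-abelian section together with its induced outer action again produces only simple (hence $\mathfrak{U}_c$-) chief factors in any embedding, a property governed by $\mathrm{Out}$ of the simple group involved.
\end{sloppypar}

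Having reduced to that, the remaining work is purely computational: I must check that $\mathrm{f}^*(H/K,G)$ is decidable in polynomial time in $n$ for $G\le S_n$. In the abelian case $H/K\cong C_p$ and the condition is the standard one — that $G/C_G(H/K)$ is cyclic of order dividing $p-1$ — which is checkable via \ref{i1}, \ref{i3} of Theorem~\ref{Basic}. In the non-abelian case $H/K$ is a direct power of a non-abelian simple group $J$ with $G$ permuting the factors and inducing outer automorphisms; using \ref{i1}, \ref{i3}, \ref{i6}, \ref{i7} of Theorem~\ref{Basic} one computes $G/C_G(H/K)$, identifies $J$ and the induced group of automorphisms, and the membership condition becomes a bounded look-up depending only on $J$ (and CFSG guarantees the relevant data about $\mathrm{Out}(J)$ and its action on maximal subgroups is available). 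Once $\mathrm{f}^*$ is shown to be a polynomial-time chief factor function, Theorem~\ref{thm1} delivers $\mathrm{Int}_{\mathfrak{U}_c}(G/K)$ in polynomial time, completing the proof.

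The main obstacle I anticipate is not the algorithmics but pinning down the correct $\mathrm{f}^*$: one must prove that the intersection of all $\mathfrak{U}_c$-maximal subgroups is exactly the hypercenter associated to a \emph{local} (chief-factor-wise) condition. This is precisely the content of the Shemetkov-type question, and for $\mathfrak{U}_c$ it is known (this is where \cite{Murashka2018} or the hereditary-saturated machinery of \cite{Skiba2012} is invoked, noting $\mathfrak{U}_c$ is hereditary); the delicate point is that $\mathfrak{U}_c$ is Baer-local rather than saturated, so one needs the version of the $\mathrm{Int}_\mathfrak{F}=\mathrm{Z}_{\mathfrak{F}^*}$ correspondence valid for Baer-local formations, together with an explicit description of the ``boundary'' condition on non-abelian chief factors. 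Everything after that reduction is a routine application of Theorem~\ref{thm1} and Theorem~\ref{Basic}.
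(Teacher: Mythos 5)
Your proposal contains a genuine error, and it is exactly the subtlety this theorem turns on. You claim that for $\mathrm{f}(H/K,G)=1$ iff $H/K$ is simple one has $\mathrm{Z}(G,\mathrm{f})=\mathrm{Z}_{\mathfrak{U}_c}(G)$. That is false. By definition a chief factor $H/K$ is $\mathfrak{U}_c$-central when $(H/K)\rtimes\bigl(G/C_G(H/K)\bigr)\in\mathfrak{U}_c$, and for a non-abelian $H/K$ this semidirect product is a primitive group of type 3 with a quotient isomorphic to $G/C_G(H/K)$; so $\mathfrak{U}_c$-centrality requires \emph{both} that $H/K$ be simple \emph{and} that $G/C_G(H/K)\in\mathfrak{U}_c$. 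Concretely, take $G\simeq\mathrm{Aut}(P\Omega^+_8(p^f))$ with simple socle $I$: the chief factor $I$ is simple, so your $\mathrm{f}$ places it inside $\mathrm{Z}(G,\mathrm{f})$, but $G/C_G(I)$ has a normal section isomorphic to $S_4\notin\mathfrak{U}_c$, so $I\not\leq\mathrm{Z}_{\mathfrak{U}_c}(G)$. The underlying conceptual slip is inferring $\mathrm{Z}(G,\mathrm{f})=\mathrm{Z}_{\mathfrak{U}_c}(G)$ from $\mathcal{C}(\mathrm{f})=\mathfrak{U}_c$: many chief factor functions define the same class but different ``hypercenters'' (this is precisely the $Z$-saturation phenomenon discussed in the introduction). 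The correct computation of $\mathrm{Z}_{\mathfrak{U}_c}$ must additionally test $[G^{\mathfrak{U}_c},H]\subseteq K$ on non-abelian factors, which is feasible because $G^{\mathfrak{U}_c}$ is computable in polynomial time.

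You also have the two subgroups essentially swapped. The simple-chief-factor function $\mathrm{f}_1$ computes $\mathrm{Int}_{\mathfrak{U}_c}(G)$, not the hypercenter: $\mathrm{Z}(G,\mathrm{f}_1)$ is the largest normal subgroup all of whose $G$-composition factors are simple, whence $\mathrm{Z}(G,\mathrm{f}_1)H\in\mathfrak{U}_c$ for every $\mathfrak{U}_c$-subgroup $H$, giving $\mathrm{Z}(G,\mathrm{f}_1)=\mathrm{Int}_{\mathfrak{U}_c}(G)$ with no condition whatsoever on the induced action. By contrast, your treatment of $\mathrm{Int}_{\mathfrak{U}_c}$ is a black box: the function $\mathrm{f}^*$ is never pinned down (its definition quantifies over all overgroups $X$ and even refers back to $\mathrm{Int}_{\mathfrak{U}_c}$), you defer its existence and correctness to an unspecified Skiba-type theorem, and the shape you guess for it --- an $\mathrm{Out}(J)$-dependent restriction on $G/C_G(H/K)$ --- is not what holds here. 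Since identifying $\mathrm{Int}_{\mathfrak{U}_c}$ and $\mathrm{Z}_{\mathfrak{U}_c}$ with concrete, polynomial-time-checkable chief factor functions (and observing that they differ) is the entire content of the theorem, the proposal does not establish the statement; the reduction to Theorem~\ref{thm1} and Theorem~\ref{Basic} that follows is fine but rests on the wrong functions.
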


\begin{proof}
  Let $\mathrm{f}_1$ checks if $H/K$ is simple. Then $\mathrm{f}_1(H/K, G)$ can be computed in polynomial time by \ref{i7} of Theorem \ref{Basic}.  Therefore $\mathfrak{U}_c=\mathcal{C}(\mathrm{f}_1)$. It is clear that $\mathrm{Z}(G, \mathrm{f}_1)$ is the greatest normal subgroup of $G$ all whose $G$-composition factors are simple. Therefore $\mathrm{Z}(G, \mathrm{f}_1)H\in\mathfrak{U}_c$ for every $\mathfrak{U}_c$-subgroup $H$ of $G$. Hence $\mathrm{Z}(G, \mathrm{f}_1)=\mathrm{Int}_{\mathfrak{U}_c}(G)$.

  From the other hand let $G\simeq \mathrm{Aut}(P\Omega^+_8(p^f))$ for an odd $p$ and $P\Omega^+_8(p^f)\simeq I\trianglelefteq G$. Note that $I\leq \mathrm{Int}_{\mathfrak{U}_c}(G)$. Now $G/C_G(I)\not\in\mathfrak{U}_c$ because it contains a normal section isomorphic to $S_4$. Note that an abelian chief factor is $\mathfrak{U}_c$-central iff it is simple. If $H/K$ is a non-abelian factor, then $T=(H/K)\rtimes (G/C_G(H/K))$ has two minimal normal subgroups whose quotients are isomorphic to $G/C_G(H/K)$ by \cite[A, Theorem 15.2(3)]{Doerk1992}. Now $T\in \mathfrak{U}_c$ iff $G/C_G(H/K)\in\mathfrak{U}_c$. The last is equivalent to $[G^{\mathfrak{U}_c}, H]\subseteq K$. According to \cite[Theorem 1]{Murashka2024} we can compute $G^{\mathfrak{U}_c}$ in  polynomial time. Therefore we can check if $H/K$ is $\mathfrak{U}_c$-central in $G$ in polynomial time and hence compute $\mathrm{Z}_{\mathfrak{U}_c}(G/K)$.
\end{proof}

Recall that a group is called quasinilpotent if every its element induces an inner automorphism on every its chief factor. The class of all quasinilpotent groups is denoted by $\mathfrak{N}^*$.

\begin{thm}
  In polynomial time one can compute $\mathrm{Int}_{\mathfrak{N}^*}(G/K)=\mathrm{Z}_{\mathfrak{N}^*}(G/K)$ for $K\trianglelefteq G\leq S_n$.
\end{thm}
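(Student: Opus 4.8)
The plan is to reduce the statement about $\mathfrak{N}^*$ to an application of Theorem~\ref{thm1} together with the characterization of $\mathfrak{N}^*$-central chief factors, and to handle the $\mathrm{Int}$-versus-$\mathrm{Z}$ equality separately. First I would recall the standard fact (see \cite[Chapter 1]{Guo2015} or \cite{s6}) that $\mathfrak{N}^*$ is a Baer-local formation whose Baer function $f$ satisfies $f(J)=(1)$ for $J$ abelian and $f(J)=\mathfrak{N}^*$-like trivial action for $J$ non-abelian; concretely, a chief factor $H/K$ of $G$ is $\mathfrak{N}^*$-central precisely when $G/C_G(H/K)$ acts as inner automorphisms on $H/K$, i.e. when $G=C_G(H/K)H$. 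For an abelian chief factor this means $G=C_G(H/K)$, i.e. $H/K$ is central; for a non-abelian chief factor $H/K$ this means $G/C_G(H/K)\simeq \mathrm{Inn}(H/K)\simeq H/K\cdot Z(H/K)$, equivalently $[G,H]\le K$ in the abelian case and the index condition $|G/C_G(H/K)|=|H/K : Z(H/K)|$ in the non-abelian case.

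Next I would define $\mathrm{f}(H/K,G)=1$ iff $G=C_G(H/K)H$, check that this is a chief factor function (conditions (1) and (2) of Definition~\ref{def1} are immediate from the corresponding properties of centralizers and quotients, exactly as in the proofs of the earlier theorems in this section), and verify that $\mathrm{f}$ can be evaluated in polynomial time: compute $C_G(H/K)$ by \ref{i1} of Theorem~\ref{Basic}, form $\langle C_G(H/K), H\rangle$ by \ref{i6}, and compare orders with $|G|$ via \ref{i3}. Then $\mathcal{C}(\mathrm{f})=\mathfrak{N}^*$ and $\mathrm{Z}(G,\mathrm{f})=\mathrm{Z}_{\mathfrak{N}^*}(G)$, so Theorem~\ref{thm1} immediately gives that $\mathrm{Z}_{\mathfrak{N}^*}(G/K)$ is computable in polynomial time for every $K\trianglelefteq G\le S_n$.

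It then remains to establish the equality $\mathrm{Int}_{\mathfrak{N}^*}(G)=\mathrm{Z}_{\mathfrak{N}^*}(G)$. One inclusion is general: the $\mathfrak{F}$-hypercenter is always contained in $\mathrm{Int}_\mathfrak{F}(G)$ for a formation $\mathfrak{F}$ closed under the relevant operations, since every chief factor of $\mathrm{Z}_{\mathfrak{N}^*}(G)$ is $\mathfrak{N}^*$-central and hence $\mathrm{Z}_{\mathfrak{N}^*}(G)H\in\mathfrak{N}^*$ for every $\mathfrak{N}^*$-maximal $H$; here I would lean on the fact that $\mathfrak{N}^*=\mathrm{E}_{\mathfrak{Z}}(1)$ is $Z$-saturated, so products with hypercentral pieces stay in $\mathfrak{N}^*$. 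The reverse inclusion $\mathrm{Int}_{\mathfrak{N}^*}(G)\le\mathrm{Z}_{\mathfrak{N}^*}(G)$ is the one that requires real work: one must show that the intersection of the $\mathfrak{N}^*$-maximal subgroups contains only $\mathfrak{N}^*$-central chief factors. I expect this to be the main obstacle, and I would handle it by invoking the known result that $\mathfrak{N}^*$ (being the class of quasinilpotent groups, a hereditary Baer-local formation containing $\mathfrak{N}$ with the right boundary properties) satisfies Shemetkov's $\mathrm{Int}=\mathrm{Z}$ property — this is in the literature on $\mathrm{Int}_\mathfrak{F}$ (cf. \cite{Skiba2012, Murashka2018}), using that $\mathfrak{N}^*$ is generated by groups that are, modulo their hypercenter, a product of non-abelian simple groups, so a $\mathfrak{N}^*$-maximal subgroup acting nontrivially-but-outer on a chief factor could be enlarged. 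Once the equality is in hand, the polynomial-time computation of $\mathrm{Int}_{\mathfrak{N}^*}(G/K)$ follows from that of $\mathrm{Z}_{\mathfrak{N}^*}(G/K)$, completing the proof.
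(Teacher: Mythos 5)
Your proposal is correct and takes essentially the same route as the paper: both define $\mathrm{f}(H/K,G)=1$ iff $G=HC_G(H/K)$, observe this is a chief factor function checkable in polynomial time via Theorem~\ref{Basic}, apply Theorem~\ref{thm1}, and outsource the equality $\mathrm{Int}_{\mathfrak{N}^*}(G)=\mathrm{Z}_{\mathfrak{N}^*}(G)$ to the literature (the paper cites \cite{Murashka2018} for exactly this, so your reliance on that reference rather than reproving the inclusion $\mathrm{Int}_{\mathfrak{N}^*}(G)\leq\mathrm{Z}_{\mathfrak{N}^*}(G)$ is not a gap).
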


\begin{proof}
  According to \cite[Corollary 1]{Murashka2018} $\mathrm{Int}_{\mathfrak{N}^*}(G)=\mathrm{Z}_{\mathfrak{N}^*}(G)$ holds for every group  $G$. From \cite[Remark 2]{Murashka2018} $\mathrm{Z}_{\mathfrak{N}^*}(G)$ is the greatest normal subgroup
of $G$ such that every element of $G$ induces an inner automorphism on every chief
factor of $G$ below $\mathrm{Z}_{\mathfrak{N}^*}(G)$, i.e. $\mathrm{Z}_{\mathfrak{N}^*}(G)=\mathrm{Z}(G, \mathrm{f})$ where $\mathrm{f}$ checks if every element of $G$ induces an inner automorphism on $H/K$ that is $G=HC_G(H/K)$ or $G/K=(H/K)C_{G/K}(H/K)$. This condition can be checked in polynomial time by Theorem~\ref{Basic}. It is straightforward to check that $\mathrm{f}$ is a chief factor function. Thus  in polynomial time one can compute $\mathrm{Int}_{\mathfrak{N}^*}(G/K)=\mathrm{Z}_{\mathfrak{N}^*}(G/K)$ for $K\trianglelefteq G\leq S_n$ by Theorem \ref{thm1}.
\end{proof}

\subsection{Formations with the Shemetkov property}

Recall \cite{Semenchuck1984} (see also \cite[Chapter 6]{s9}) that a formation $\mathfrak{F}$ has the Shemetkov property (resp. in $\mathfrak{S}$) if every (resp. soluble) $\mathfrak{F}$-critical group is either a Schmidt group or a cyclic group of prime order. Note that if a hereditary local formation $\mathfrak{F}$ has the Shemetkov property (and contains all nilpotent groups) then it
can be locally defined by $f$ where $f(p)=\mathfrak{G}_{g(p)}$ and $g$ assigns to a prime $p$ a set of primes $g(p)$  with $p\in g(p)$. The converse is not true. Nevertheless a hereditary local formation $\mathfrak{F}$ of soluble groups has the Shemetkov property in $\mathfrak{S}$ and contains all nilpotent groups iff $\mathfrak{F}$ is locally defined by $f$ where $f(p)=\mathfrak{S}_{g(p)}$ and $g$ assigns to a prime $p$ a set of primes $g(p)$  with $p\in g(p)$. With the help of the next result one can compute the $\mathfrak{F}$-hypercenter and the intersection of all maximal $\mathfrak{F}$-subgroups of a (resp. soluble) group $G$ where $\mathfrak{F}$ is a hereditary formation with the Shemetkov property (resp. in $\mathfrak{S}$).

\begin{thm}\label{shem}
  Let $g$ be a function which assigns to a prime $p$ a set of primes $g(p)$  with $p\in g(p)$ and $h(p)=\mathfrak{G}_{g(p)}$. Assume that $g(p)\cap\pi(G)$ can be computed in a polynomial time for every $p\in\pi(G)$. Let $\mathfrak{F}$ be a local formation defined by $h$. Then   $\mathrm{Int}_\mathfrak{F}(G/K)$ and $\mathrm{Z}_\mathfrak{F}(G/K)$ can be computed in a polynomial time for every $K\trianglelefteq G\leq S_n$.
\end{thm}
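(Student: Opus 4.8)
The plan is to show that $\mathfrak{F}$ (as defined by the local function $h$ with $h(p)=\mathfrak{G}_{g(p)}$) satisfies the hypotheses of Corollary~\ref{cor1.1}, which handles the $\mathrm{Z}_\mathfrak{F}$-part, and then to reduce $\mathrm{Int}_\mathfrak{F}$ to $\mathrm{Z}_\mathfrak{F}$ using the Shemetkov property. For the hypercenter, I first need to observe that for a group $G/K\leq S_n$ and a prime $p$, the residual $(G/K)^{f(p)}=(G/K)^{\mathfrak{G}_{g(p)}}$ is exactly $\mathrm{O}^{\pi}(G/K)$ where $\pi=g(p)\cap\pi(G)$; the point is that only primes actually dividing $|G|$ matter, and by hypothesis $g(p)\cap\pi(G)$ is computable in polynomial time, while $|G|$ and hence $\pi(G)$ are computable by \ref{i3} of Theorem~\ref{Basic}. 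Then $\mathrm{O}^\pi(G/K)$ is computable in polynomial time by \ref{i4} of Theorem~\ref{Basic}. Applying Corollary~\ref{cor1.1} gives that $\mathrm{Z}_\mathfrak{F}(G/K)$ is computable in polynomial time for every $K\trianglelefteq G\leq S_n$.

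For $\mathrm{Int}_\mathfrak{F}(G/K)$, the idea is to use the known result (for hereditary formations with the Shemetkov property, contained in the references cited just before the theorem, e.g.\ \cite{Skiba2012, Murashka2018} and the theory in \cite[Chapter 6]{s9}) that $\mathrm{Int}_\mathfrak{F}(G)=\mathrm{Z}_\mathfrak{F}(G)$ holds for every group $G$ when $\mathfrak{F}$ is a hereditary local formation defined by $h(p)=\mathfrak{G}_{g(p)}$ with $p\in g(p)$. Since each such $\mathfrak{F}$ is hereditary (as $\mathfrak{G}_{g(p)}$ is a hereditary formation and local formations defined by hereditary local functions are hereditary, \cite[IV]{Doerk1992}) and has the Shemetkov property, this equality applies, and the computation of $\mathrm{Int}_\mathfrak{F}(G/K)$ reduces to that of $\mathrm{Z}_\mathfrak{F}(G/K)$ already established. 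If one instead works in $\mathfrak{S}$ with $h(p)=\mathfrak{S}_{g(p)}$, the same argument goes through verbatim after replacing $\mathrm{O}^\pi$ computations with soluble-residual computations combined with \ref{i2} and \ref{i4} of Theorem~\ref{Basic}, and using the Shemetkov-property-in-$\mathfrak{S}$ version of the $\mathrm{Int}=\mathrm{Z}$ equality.

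The main obstacle I anticipate is verifying cleanly that $(G/K)^{\mathfrak{G}_{g(p)}}=\mathrm{O}^{g(p)\cap\pi(G)}(G/K)$ and that this is all that is needed — one must be careful that the hypothesis only gives $g(p)\cap\pi(G)$, not $g(p)$ itself, so every invocation of the residual must be rephrased in terms of primes dividing the (quotient) group order; but since $\mathfrak{G}_{g(p)}$-membership of a group $H$ depends only on $\pi(H)\subseteq g(p)$, and $\pi(H)\subseteq\pi(G)$ for sections $H$ of $G$, this is routine. A secondary point requiring care is citing the correct form of the $\mathrm{Int}_\mathfrak{F}(G)=\mathrm{Z}_\mathfrak{F}(G)$ theorem: it must be the version for (normally) hereditary local formations with the Shemetkov property, which is exactly the setting of Shemetkov's question recalled at the start of Section~\ref{appl}, so the citation should point to \cite{Skiba2012} (or the appropriate result in \cite{s9}) rather than to anything about general formations. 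Once these two bookkeeping points are settled, the theorem follows by assembling Corollary~\ref{cor1.1} with the reduction $\mathrm{Int}_\mathfrak{F}=\mathrm{Z}_\mathfrak{F}$.
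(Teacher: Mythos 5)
Your computation of $\mathrm{Z}_\mathfrak{F}(G/K)$ is fine and matches the paper: $(G/K)^{h(p)}=\mathrm{O}^{g(p)\cap\pi(G)}(G/K)$ is computable by \ref{i3} and \ref{i4} of Theorem \ref{Basic}, and Corollary \ref{cor1.1} applies. The $\mathrm{Int}_\mathfrak{F}$ half, however, rests on a false claim. The equality $\mathrm{Int}_\mathfrak{F}(G)=\mathrm{Z}_\mathfrak{F}(G)$ does \emph{not} hold for all hereditary local formations with the Shemetkov property of the form $h(p)=\mathfrak{G}_{g(p)}$ --- indeed, deciding for which (normally) hereditary local formations it holds is exactly Shemetkov's question recalled at the start of Section \ref{appl}, which the paper stresses is still open; \cite{Skiba2012} gives a \emph{characterization} of the saturated formations with this property, not a proof that all of them have it. A concrete counterexample inside the scope of the theorem: let $\mathfrak{F}$ be the class of $2$-nilpotent groups ($g(2)=\{2\}$, $g(q)=\mathbb{P}$ for $q\neq 2$) and $G=V\rtimes A_4$ with $V$ a faithful simple $\mathbb{F}_7[A_4]$-module (it exists by \cite[B, Theorem 10.3]{Doerk1992}). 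For every maximal $2$-nilpotent $M\leq G$ the product $MV$ is again $2$-nilpotent (its $2$-chief factors all lie above $V$ and are chief factors of the $2$-nilpotent group $MV/V$), so $V\leq\mathrm{Int}_\mathfrak{F}(G)$; but $C_G(V)=V$ and $V\rtimes(G/C_G(V))\simeq G\notin\mathfrak{F}$, so $V$ is not $\mathfrak{F}$-central and $\mathrm{Z}_\mathfrak{F}(G)=1$. Thus the reduction ``$\mathrm{Int}_\mathfrak{F}=\mathrm{Z}_\mathfrak{F}$'' cannot be the route. (A secondary inaccuracy: the paper notes that being locally defined by $h(p)=\mathfrak{G}_{g(p)}$ does not imply the Shemetkov property; the converse implication is the true one.)

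What the paper actually does for $\mathrm{Int}_\mathfrak{F}$ is prove from scratch the characterization: $H/K\leq\mathrm{Int}_\mathfrak{F}(G/K)$ if and only if $G/C_G(H/K)\in h(p)$ for every $p\in\pi(H/K)$. Because $h$ is not integrated, this condition is strictly weaker than $\mathfrak{F}$-centrality (that is the whole point of the example above), so it defines a \emph{different} chief factor function $\mathrm{f}$, checkable in polynomial time via $[\mathrm{O}^{g(p)}(G),H]\leq K$. Combined with \cite[Theorem C(e)]{Skiba2012} (which gives $\mathrm{Int}_\mathfrak{F}(G/I)=\mathrm{Int}_\mathfrak{F}(G)/I$ for $I\leq\mathrm{Int}_\mathfrak{F}(G)$, ensuring $\mathrm{Int}_\mathfrak{F}(G/K)=\mathrm{Z}(G/K,\mathrm{f})$), Theorem \ref{thm1} then computes $\mathrm{Int}_\mathfrak{F}(G/K)$. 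You need to supply this characterization and its proof; it does not follow from the hypercenter computation.
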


\begin{proof}
  Let prove that  $H/K\leq \mathrm{Int}_\mathfrak{F}(G/K)$ iff $G/C_G(H/K)\in h(p)$ for every $p\in\pi(H/K)$.

  Assume that $G/C_G(H/K)\in h(p)$ for every $p\in\pi(H/K)$. Let $M/K$ be an $\mathfrak{F}$-maximal subgroup of $G/K$. Let $T=MH$. Note that $T/C_T(U/V)\in h(p)$ for all $p\in\pi(U/V)$ and every chief factor $U/V$ of $T/K$ below $H/K$. From $T/H\in \mathfrak{F}$ it follows that $T/C_T(U/V)\in h(p)$ for all $p\in\pi(U/V)$ and every chief factor $U/V$ of $T/K$ above $H/K$. Thus $T/K\in\mathfrak{F}$. Hence $T/K=M/K$. Therefore $H/K\leq \mathrm{Int}_\mathfrak{F}(G/K)$.

  Assume that $H/K\leq \mathrm{Int}_\mathfrak{F}(G/K)$. It means that $\pi(H/K)\subseteq g(p)$ for all $p\in\pi(H/K)$. Let $M/K$ be an $\mathfrak{F}$-maximal subgroup of $G/K$ and $K=H_0\trianglelefteq H_1\trianglelefteq\dots\trianglelefteq H_m=H$ be a part of chief series of $M/K$.  Then $(M/K)/C_{M/K}(H_i/H_{i-1})\in h(p)$ for all $p\in\pi(H/K)$. Let $C/K=\cap_{i=1}^m C_{M/K}(H_i/H_{i-1})$. Now $(C/K)/C_{M/K}(H/K)$ is a $\pi(H/K)$-group by [A, Corollary 12.4(a)]. Since $h(p)$ is a formation,  $(M/K)/(C/K)\in h(p)$ for all $p\in\pi(H/K)$. Thus $(M/K)/C_{M/K}(H/K)\in \mathfrak{G}_{g(p)}=h(p)$ for all $p\in\pi(H/K)$.
    Since $g(p)\neq\emptyset$ for all prime $p$, we see that every element of $G/K$ belongs to some its $\mathfrak{F}$-maximal subgroup. From $(M/K)/C_{M/K}(H/K)$\linebreak$\simeq MC_{G/K}(H/K)/C_{G/K}(H/K)$ it follows that $G/C_G(H/K)$ is a  $\mathfrak{G}_{g(p)}$-group for every $p\in\pi(H/K)$, i.e. $G/C_G(H/K)\in h(p)$ for every $p\in\pi(H/K)$.

    From \cite[Theorem C(e)]{Skiba2012} it follows that if $I\leq \mathrm{Int}_\mathfrak{F}(G)$, then $\mathrm{Int}_\mathfrak{F}(G/I)=\mathrm{Int}_\mathfrak{F}(G)/I$. Therefore $\mathrm{Int}_\mathfrak{F}(G/K)=\mathrm{Z}(G/K, \mathrm{f})$ where $\mathrm{f}$ checks if $G/C_G(H/K)\in h(p)$ for every $p\in\pi(H/K)$. The last condition is equivalent to $[\mathrm{O}^{g(p)}, H]\subseteq K$ and can be checked in polynomial time by Theorem \ref{Basic}. Thus the statement of theorem follows from Theorem \ref{thm1} and Corollary \ref{cor1.1}.
\end{proof}

Let $\varphi$ be some linear ordering on $\mathbb{P}$. Recall \cite[IV, Examples 3.4(g)]{Doerk1992} that a group $G$ is called a Sylow tower group of type $\varphi$  if  it has normal Hall $\{p_1, \dots, p_t\}$-subgroups for all $1\leq t\leq k$ where $\pi(G)=\{p_1,\dots, p_k\}$ and $p_i>_{\varphi}p_j$ for $i<j$. Let $g(p)=\{q\mid q\leq_{\varphi} p\}$. Note that the class of  all Sylow tower group of type $\varphi$ can be locally defined by $h(p)=\mathfrak{G}_{g(p)}$.

\begin{cor}
  Let $\varphi$ be a linear ordering on $\mathbb{P}$ such that we can check in polynomial time if $p\leq_\varphi q$ for any primes $p, q\leq n$. If $\mathfrak{F}$ is the class of  all Sylow tower group of type $\varphi$, then $\mathrm{Int}_\mathfrak{F}(G/K)$ and $\mathrm{Z}_\mathfrak{F}(G/K)$ can be computed in a polynomial time for every $K\trianglelefteq G\leq S_n$.
\end{cor}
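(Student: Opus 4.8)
The plan is to obtain this corollary directly from Theorem~\ref{shem}. As recalled in the paragraph preceding the statement, the class $\mathfrak{F}$ of all Sylow tower groups of type $\varphi$ is the local formation locally defined by $h(p)=\mathfrak{G}_{g(p)}$, where $g(p)=\{q\in\mathbb{P}\mid q\leq_\varphi p\}$; indeed, a soluble group $G$ has the requisite chain of normal Hall subgroups exactly when, for every (abelian) chief factor $H/K$ of $G$, say a $p$-group, one has $G/C_G(H/K)\in\mathfrak{G}_{g(p)}$. Since $\leq_\varphi$ is reflexive we have $p\in g(p)$ for every prime $p$, so $g$ is of the form demanded by Theorem~\ref{shem}, and the only hypothesis of that theorem still to be checked is that $g(p)\cap\pi(G)$ can be computed in polynomial time for each $p\in\pi(G)$.

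To verify this I would first compute $|G|$ in polynomial time by~\ref{i3} of Theorem~\ref{Basic}. As $G\leq S_n$, the integer $|G|$ divides $n!$, hence has $O(n\log n)$ bits and all its prime divisors are at most $n$; sieving the primes up to $n$ and testing each for divisibility of $|G|$ therefore produces $\pi(G)$ in polynomial time. Now fix $p\in\pi(G)$: for each of the at most $n$ primes $q\in\pi(G)$ test whether $q\leq_\varphi p$, which costs polynomial time by the hypothesis on $\varphi$ since $p,q\leq n$; collecting the $q$ that pass the test yields $g(p)\cap\pi(G)=\{q\in\pi(G)\mid q\leq_\varphi p\}$ in polynomial time, as required.

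With this done, Theorem~\ref{shem} applies and delivers polynomial-time algorithms for both $\mathrm{Int}_\mathfrak{F}(G/K)$ and $\mathrm{Z}_\mathfrak{F}(G/K)$. The only point that needs a moment's care is that the possibly infinite set $g(p)$ enters the computations inside the proof of Theorem~\ref{shem} solely through the residual $\mathrm{O}^{g(p)}(G)$, which coincides with $\mathrm{O}^{g(p)\cap\pi(G)}(G)$ and is therefore computable by~\ref{i4} of Theorem~\ref{Basic} once $g(p)\cap\pi(G)$ is known; this is precisely why Theorem~\ref{shem} is phrased in terms of $g(p)\cap\pi(G)$. I do not expect a genuine obstacle here: the corollary is a routine specialisation of the preceding theorem, its whole content being the elementary observation that $\pi(G)$, and hence every finite trace of $g$ on $\pi(G)$, is cheaply recovered from $|G|$.
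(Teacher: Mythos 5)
Your proposal is correct and follows exactly the route the paper intends: the corollary is a direct specialisation of Theorem~\ref{shem} via the local definition $h(p)=\mathfrak{G}_{g(p)}$ with $g(p)=\{q\mid q\leq_\varphi p\}$ stated in the preceding paragraph, with the only thing to verify being that $g(p)\cap\pi(G)$ is computable in polynomial time, which you do in the standard way by extracting $\pi(G)$ from $|G|$ and testing the at most $n$ relevant comparisons $q\leq_\varphi p$. Your closing remark about $\mathrm{O}^{g(p)}(G)=\mathrm{O}^{g(p)\cap\pi(G)}(G)$ correctly identifies why the theorem's hypothesis is phrased on the finite trace of $g$.
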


Note that the class of all $p$-nilpotent groups can be locally defined by $h$ where $h(p)=\mathfrak{G}_p$ and $h(q)=\mathfrak{G}$ for $q\neq p$.

\begin{cor}
 Let $p$ be a prime. If $\mathfrak{F}$ is the class of  all $p$-nilpotent groups, then $\mathrm{Int}_\mathfrak{F}(G/K)$ and $\mathrm{Z}_\mathfrak{F}(G/K)$ can be computed in a polynomial time for every $K\trianglelefteq G\leq S_n$.
\end{cor}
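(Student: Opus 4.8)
The plan is to read off this corollary as a direct instance of Theorem~\ref{shem}, so the only real task is to exhibit a suitable function $g$ and to check that all hypotheses of that theorem hold for the class of $p$-nilpotent groups.

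First I would take $g(p)=\{p\}$ and $g(q)=\mathbb{P}$ (the set of all primes) for every prime $q\neq p$. Then $h(p)=\mathfrak{G}_{g(p)}=\mathfrak{G}_p$ and $h(q)=\mathfrak{G}_{g(q)}=\mathfrak{G}$ for $q\neq p$, which is exactly the local function named in the remark preceding the statement; hence $\mathfrak{F}$, the class of $p$-nilpotent groups, is the local formation defined by $h$, as Theorem~\ref{shem} requires. The condition $r\in g(r)$ for every prime $r$ is trivial, since $p\in\{p\}$ and $q\in\mathbb{P}$. It then remains to verify the computability hypothesis, namely that $g(r)\cap\pi(G)$ can be found in polynomial time for each $r\in\pi(G)$. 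Here $g(p)\cap\pi(G)=\{p\}\cap\pi(G)\subseteq\{p\}$ is immediate, and $g(q)\cap\pi(G)=\pi(G)$ for $q\neq p$, so it suffices to compute $\pi(G)$ in polynomial time: since $|G|$ divides $n!$, every prime divisor of $|G|$ is at most $n$, so $\pi(G)$ is obtained by computing $|G|$ (polynomial time by \ref{i3} of Theorem~\ref{Basic}) and testing it for divisibility by each of the $O(n/\log n)$ primes $\leq n$. With $g$ as above, Theorem~\ref{shem} then delivers polynomial-time algorithms computing both $\mathrm{Int}_\mathfrak{F}(G/K)$ and $\mathrm{Z}_\mathfrak{F}(G/K)$ for every $K\trianglelefteq G\leq S_n$.

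I do not expect a genuine obstacle: the argument is bookkeeping on top of Theorem~\ref{shem}, and the only point that needs a word of justification is the claim, recorded in the preceding remark, that the displayed $h$ really locally defines the $p$-nilpotent groups — a standard fact (for a non-abelian chief factor $H/K$ the quotient $G/C_G(H/K)$ has $H/K$ as a subnormal subgroup and hence a composition factor of order divisible by primes other than $p$, by the $p^{a}q^{b}$-theorem, so it cannot lie in $\mathfrak{G}_p$; thus only the $p$-chief factors are constrained, which is precisely what forces a normal $p$-complement), and which in any case we are entitled to assume here.
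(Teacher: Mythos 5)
Your proposal is correct and follows exactly the route the paper intends: the corollary is an immediate instance of Theorem~\ref{shem} with $g(p)=\{p\}$ and $g(q)=\mathbb{P}$ for $q\neq p$, which gives $h(p)=\mathfrak{G}_p$ and $h(q)=\mathfrak{G}$ as in the remark preceding the statement. Your verification of the computability hypothesis via computing $\pi(G)$ from $|G|$ is the only bookkeeping needed, and it is done correctly.
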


\subsection{Rank formations}

If $H/K$ is a chief factor of $G$, then $H/K=H_1/K\times\dots\times H_k/K$ where $H_i$ are isomorphic simple groups. The number $k$ is called the \emph{rank} of $H/K$ in $G$.

\begin{defn}[{\cite[Chapter VII, Definition 2.3]{Doerk1992}}]
   A \emph{rank function} $R$  is a map which associates with each prime $p$
a set $R(p)$ of natural numbers. With each rank function $R$ we associate a class of soluble groups
\begin{center}
  $\mathfrak{F}(R)=(G\in\mathfrak{S}\,|$ $G\simeq 1$ or  for all $p\in \mathbb{P}$ each $p$-chief factor of $G$ has rank in $R(p)$).
\end{center}
\end{defn}
This class is a formation of soluble groups. If  $R(p)=\{1, 2\}$ for all $p\in \mathbb{P}$, then formation $\mathfrak{F}(R)$ is non-local by  \cite[Chapter VII, Theorem 2.18]{Doerk1992} and hence is non-Baer-local.
If $R(p)=\{1\}$ for all $p\in \mathbb{P}$, then $\mathfrak{F}(R)=\mathfrak{U}$.

\begin{thm}\label{Trank}
  Let $R$ be a rank function.
  Assume that one can test if $n\in R(p)$ in  polynomial $($in $n)$ time for every $p\in \mathbb{P}$. Then  $\mathrm{Z}_{\mathfrak{F}(R)}(G/K)$  can be computed in polynomial time for   every $K\trianglelefteq G\leq S_n$.
\end{thm}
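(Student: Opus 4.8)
The plan is to produce a polynomial-time computable chief factor function $\mathrm{f}$ for which $\mathrm{Z}(G/K,\mathrm{f})=\mathrm{Z}_{\mathfrak{F}(R)}(G/K)$, and then to invoke Theorem \ref{thm1}. I would take $\mathrm{f}(H/T,G)=1$ if and only if the chief factor $H/T$ is $\mathfrak{F}(R)$-central in $G$. Since $\mathfrak{F}(R)$ is a formation, $\mathrm{f}$ is a chief factor function (as remarked after Definition \ref{def1}), and by construction $\mathrm{Z}(G/K,\mathrm{f})$ is the greatest normal subgroup of $G/K$ all of whose chief factors are $\mathfrak{F}(R)$-central, that is, $\mathrm{Z}_{\mathfrak{F}(R)}(G/K)$. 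So the whole task reduces to showing that $\mathrm{f}(H/T,G)$ can be evaluated in polynomial time for $G\le S_n$.

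The key point is a structural description of $\mathfrak{F}(R)$-centrality that never builds the semidirect product $W=(H/T)\rtimes(G/C_G(H/T))$ (which may have no small permutation representation). As $\mathfrak{F}(R)\subseteq\mathfrak{S}$, a non-abelian $H/T$ forces $W$ non-soluble, so $\mathrm{f}(H/T,G)=0$. When $H/T$ is an elementary abelian $p$-group of rank $k$, put $L=G/C_G(H/T)$; then $L$ acts on $H/T$ faithfully (by construction) and irreducibly (since $H/T$ is a chief factor of $G$), so $C_W(H/T)=H/T$ is the unique minimal normal subgroup of $W$, hence a chief factor of $W$ of rank $k$ with $W/(H/T)\cong L$. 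Therefore a chief series of $W$ is $H/T$ together with a chief series of $L$, and $W\in\mathfrak{F}(R)$ if and only if $k\in R(p)$ and $G/C_G(H/T)\in\mathfrak{F}(R)$.

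Next I would turn each of these conditions into a polynomial-time test. Abelianness of $H/T$ is decided by computing $[H,H]$ via \ref{i6} of Theorem \ref{Basic} and testing $[H,H]\le T$ via \ref{i3}; in the abelian case $|H/T|$, obtained from \ref{i3}, yields $p$ and $k$, and since $k\le\log_2|G|\le\log_2(n!)$ is polynomial in $n$ (and $p\le n$), the hypothesis on $R$ lets me test $k\in R(p)$ in polynomial time. For the remaining condition I would compute $C=C_G(H/T)$ by \ref{i1} and a chief series of $G$ through $C$ by \ref{i5}; its factors above $C$ form a chief series of $G/C$, and $G/C\in\mathfrak{F}(R)$ exactly when every such factor is abelian and has rank in $R(q)$ for its prime $q$ --- at most $2n$ tests of the same polynomial kind by Lemma \ref{chain}. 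Hence $\mathrm{f}$ is polynomial-time computable and Theorem \ref{thm1} yields the result.

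The hard part will be the structural reduction of the second paragraph: one has to see that $H/T$ is the full socle of $W$, which is forced by the faithfulness and irreducibility of the action of $G/C_G(H/T)$, so that deciding $W\in\mathfrak{F}(R)$ collapses to a single rank constraint on $H/T$ plus membership of the much smaller quotient $G/C_G(H/T)$ --- this is precisely what makes the centrality test feasible without materializing $W$.
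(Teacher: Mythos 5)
Your proposal is correct and follows essentially the same route as the paper: both reduce $\mathfrak{F}(R)$-centrality of $H/T$ to the conditions that $H/T$ is a $p$-group whose rank lies in $R(p)$ and that $G/C_G(H/T)\in\mathfrak{F}(R)$, the latter tested via a chief series, and then invoke Theorem \ref{thm1}. The only difference is that you spell out the socle argument justifying this characterization, which the paper asserts without proof.
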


\begin{proof}
  From \ref{i3} of Theorem \ref{Basic} we can compute the rank of $H/K$ in polynomial time. Hence we can check if $G/K\in\mathfrak{F}(\mathcal{R})$ for any $K\trianglelefteq G\leq S_n$ in polynomial time by \ref{i3} and \ref{i5} of Theorem \ref{Basic} and the statement of the theorem.
  Note that a chief factor $H/K$ is $\mathfrak{F}(R)$-central iff $H/K$ is a $p$-group for some prime $p$, the rank of $H/K$ is in $R(p)$ and $G/C_G(H/K)\in\mathfrak{F}(R)$. Note that $C_G(H/K)/K=C_{G/K}(H/K)$. Hence we can check if $H/K$ is $\mathfrak{F}(R)$-central in polynomial time by Theorem \ref{Basic}. Thus the statement of Theorem \ref{Trank} follows from Theorem \ref{thm1}.
\end{proof}

\subsection*{Acknowledgments}

This work is supported by BRFFR $\Phi23\textrm{PH}\Phi\textrm{-}237$.\\
I am grateful to A.\,F. Vasil'ev for helpful discussions.
%I wish to thank  the reviewers for their comments and suggestions.

{\small\bibliographystyle{siam}
\bibliography{Alg2}}

\end{document}